\newcommand{\loc}[1]{\zeta(#1)}
\newcommand{\scmax}{\Delta}
\newcommand{\oddtreevar}{m}
\tikzstyle{vert}=[shape=circle,draw=black,fill=white, inner sep=.5mm]
\newcommand{\DAG}{\textrm{DAG-width}}
\newtheorem{theorem}{Theorem}
\newtheorem{corollary}[theorem]{Corollary}
\newtheorem{lemma}[theorem]{Lemma}
\title{The Localization Game on Directed Graphs}
\author[A.\ Bonato]{Anthony Bonato}
\author[R.\ Cushman]{Ryan Cushman}
\author[T.G.\ Marbach]{Trent G.\ Marbach}
\author[B.\ Pittman]{Brittany Pittman}
\address[A1,A2,A3,A4]{Toronto Metropolitan University, Toronto, Canada}
\email[A1]{(A1) abonato@ryerson.ca}
\email[A2]{(A2) ryan.cushman@ryerson.ca}
\email[A3]{(A3) trent.marbach@ryerson.ca}
\email[A4]{(A4) brittany.pittman@ryerson.ca}
\begin{document}
\maketitle

\begin{abstract}
In the Localization game played on graphs, a set of cops uses distance probes to identify the location of an invisible robber. We present an extension of the game and its main parameter, the localization number, to directed graphs. We present several bounds on the localization number of a directed graphs, including a tight bound via strong components, a bound using a linear programming problem on hypergraphs, and bounds in terms of pathwidth and DAG-width. A family of digraphs of order $n$ is given with localization number $(1-o(1))n/2$. We investigate the localization number of random and quasi-random tournaments, and apply our results to doubly regular tournaments, which include Paley tournaments.
\end{abstract}

\section{Introduction}\label{sec:introduction}

Pursuit-evasion games, such as the Localization game and Cops and Robbers, are combinatorial models for detecting or neutralizing
an adversary’s activity on a graph. In such models, pursuers attempt to capture an evader loose on the vertices of a graph. How the players move and the rules of capture depend on which variant is studied. Such games are motivated by foundational topics in computer science, discrete mathematics, and artificial intelligence, such as robotics and network security. For surveys of pursuit-evasion games, see the books \cite{bonato1,bonato2}; see Chapter~5 of \cite{bonato2} for more on the Localization game.

Our focus is on extending the Localization game to the setting of directed graphs. We first review the game played on graphs, as many parts of the game carry over to the directed setting. In the \emph{Localization game}, there are two players playing on a graph, with one player controlling a set of $k$ \emph{cops}, where $k$ is a positive integer, and the second controlling a single \emph{robber}. The game is played over a sequence of discrete time-steps; a \emph{round} of the game is a move by the cops together with the subsequent move by the robber. The robber occupies a vertex of the graph, and when the robber is ready to move during a round, they may move to a neighboring vertex or remain on their current vertex. A move for the cops is a placement of cops on a set of vertices (note that the cops are not limited to moving to neighboring vertices). The players move on alternate time-steps with the robber going first. In each round, the cops occupy a set of vertices $u_1, u_2, \dots , u_k$ and each cop sends out a \emph{cop probe}, which gives their distance $d_i$, where $1\le i \le k$, from $u_i$ to the robber's vertex. The distances $d_i$ are nonnegative integers or may be $\infty.$ Hence, in each round, the cops determine a \emph{distance vector}\index{distance vector} $D=(d_1, d_2, \dots ,d_k)$ of cop probes. Relative to the cops' position, there may be more than one vertex $x$ with the same distance vector. We refer to such a vertex $x$ as a \emph{candidate of} $D$ or simply a \emph{candidate}.  The cops win if they have a strategy to determine, after a finite number of rounds, a unique candidate, at which time we say that the cops {\em capture} the robber. If the robber evades capture, then the robber wins. For a graph $G$, define the \emph{localization number}\index{localization number} of $G$, written $\loc{G}$, to be the least positive integer $k$ for which $k$ cops have a winning strategy.

The Localization game was first introduced for one cop by Seager~\cite{seager1}. In that paper, the robber was not allowed to \emph{backtrack}: they could not move to a vertex containing a cop in the previous round. The game in the present form was first considered in the paper \cite{car}, and subsequently studied in several papers such as~\cite{BBHMP,BHM,BHM1,BK,Bosek2018,nisse2,BDELM}.

The \emph{metric dimension} of a graph $G$, written $\beta(G),$ is equivalent to the minimum number of cops needed in the Localization game so that the cops can win in one round; see \cite{hm,slater}. We have that $\loc{G} \le \beta(G),$ which provides an avenue to apply results on the metric dimension to give bounds on the localization number.  In digraphs, the distance between vertices $u$ and $v$ is the length of a shortest directed path from $u$ to $v$. Note that in digraphs, distance may not be symmetric but the distance relation does satisfy the triangle inequality.  Metric dimension was extending to directed graphs first by Chartrand, Rains, and Zhang in \cite{chartrand-raines-zhang-2000}, and studied further in works such as \cite{ben,feng1,feng2,loz}.

The Localization game on digraphs is played analogously as on graphs, where the robber may only move along directed edges. To simplify notation, for a digraph $G$, we write $\loc{G}$ for its localization number. We follow \cite{ben} and consider non-strongly connected graphs, allowing for cop probes to return $\infty.$

To illustrate the Localization game for directed graphs, consider the digraph $D_3(i)$, which consists of three independent sets of cardinality $i$ labeled $V_0$, $V_1,$ and $V_2,$ each of cardinality $i$, such that
there is an arc from each vertex in $V_j$ to each vertex in $V_{j+1},$ where the indices are taken modulo 3. We then have that $\zeta(D_3(i))=i.$ For a sketch of the proof of this fact, note that $i-1$ cops in $V_j$ may uniquely identify each vertex in that set. The remaining cop may then ``scan'' (or visit each vertex of) both $V_{j+1}$ and $V_{j+2}$ to capture the robber. If fewer than $i$ cops play, then the robber may always find two candidates in a $V_i$ that the cops cannot distinguish.

The present work is the first to consider the Localization game for directed graphs, and we present several results in this direction. In Section~2, we provide bounds on the localization number of digraphs. In Theorem~\ref{thm:ub-sc}, we give a tight bound the localization number by the localization number of its strong components. Using rotation-symmetric tournaments, we provide families of digraphs of order $n$ with localization number $(1-o(1))n/2$ in Theorem~\ref{teg}. We apply a linear programming problem on hypergraphs to give a new bound on the metric dimension in Theorem~\ref{th}, and a lower bound for the localization number is given in Theorem~\ref{dt}. We explore bounds on the localization number in terms of pathwidth and DAG-width in Section~3. We consider the localization of random and quasi-random tournaments in Section~4. Using our results, we give bounds on the localization number of doubly random tournaments, which include Paley tournaments. Our final section includes several open problems.

All digraphs we consider are simple, finite, and \emph{oriented}, in the sense that there is at most one directed edge between a pair of distinct vertices. We refer to a directed edge as an \emph{arc}. For vertices $u$ and $v$ of a graph, if there is an arc between $u$ and $v,$ we denote it by $(u,v).$ For a vertex $u$, we define $N^-(u)$ to be the set out in-neighbors of $u$ and $N^+(u)$ to be the set of out-neighbors of $u.$ We define the in-degree of $u,$ written $\mathrm{deg}^-(u)$, to be $|N^-(u)|$; the out-degree of $u$ is defined analogously. A \emph{sink} is a vertex $u$ such that $N^+(u)=0$ and a \emph{source} satisfies $N^-(u)=0.$ For vertices $u$ and $v$ in a digraph $G$, we write $d_G(u,v)$ for the distance from $u$ to $v$, if one exists. If there is no such directed path, then $d_G(u,v) =\infty.$ We drop the subscript $G$ if it is clear from context. A digraph is \emph{strong} (or \emph{strongly connected}) if there is a directed path connecting every pair of vertices. The \emph{strong components} (or \emph{strongly connected components}) of a digraph are its maximal strongly connected subdigraphs. A \emph{tournament} is a digraph where for each pair of distinct vertices there is exactly one arc. For a digraph $G$ and $X\subseteq V(G)$, we let $G[X]$ be the subdigraph of $G$ induced by $X$. For a positive integer $n,$ let $[n] = \{1,2,\dots ,n\}.$ We use $\log x$ to denote the natural logarithm of $x$. For background on digraphs, the reader is directed to \cite{bang-jensen-gutin-2009} and to \cite{west} for background on graphs.

\section{Bounds on the localization number}

Finding exact values for the localization number of digraphs may be challenging. We focus instead on finding bounds on the localization number in terms of their order, strong components, and various width parameters.

\subsection{Extremal values}

A natural question is consider digraphs with small and large localization numbers. This thread of research is motivated by the classification of cop-win graphs \cite{nw,q,q2}, and by Meyniel's conjecture on the largest asymptotic value of the cop number \cite{bonato1,bonato2}.

In the case when distance probes are only considered to be finite, the digraphs with metric dimension $1$ were classified in  \cite{chartrand-raines-zhang-2000}. We extend this characterization to the more general setting allowing infinite distance probe.
\begin{theorem}  \label{lem:MetricDim1}
A digraph $G$ on $n$ vertices has metric dimension $1$ if and only if the following properties hold.
\begin{enumerate}
\item The digraph $G$ has a directed Hamiltonian path $(v_1, v_2,\ldots, v_n)$ with no arc connecting $v_i$ to $v_j$ if $i < j-1$; or
\item The digraph $G$ has a source vertex, and removing this source vertex yields a digraph that has a directed Hamiltonian path $(v_1, v_2,\ldots, v_{n-1})$ with no arc connecting $v_i$ to $v_j$ if $i < j-1$.
\end{enumerate}
\end{theorem}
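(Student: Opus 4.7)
The plan is to prove both directions by analyzing the distances from a single resolving vertex, using the fact that a resolving set of size $1$ means there is some $u \in V(G)$ such that the distances $d(u,v)$ for $v \in V(G)$ are pairwise distinct (with at most one such distance equal to $\infty$, since two infinite distances would fail to be resolved).

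For the \emph{sufficiency} direction, suppose (1) holds and place the single probe at $v_1$. The Hamiltonian path $v_1 \to v_2 \to \cdots \to v_n$ shows $d(v_1, v_i) \leq i-1$. For the matching lower bound, observe that every arc out of $v_j$ either advances the index by exactly one (via the Hamiltonian arc to $v_{j+1}$) or decreases it (since by hypothesis there is no arc from $v_j$ to $v_k$ with $k > j+1$), so any walk from $v_1$ to $v_i$ must contain at least $i-1$ "forward" arcs, yielding $d(v_1, v_i) = i-1$. Hence the distance vector from $v_1$ takes the distinct values $0, 1, \dots, n-1$. If instead (2) holds, place the probe again at $v_1$, the first vertex of the Hamiltonian path of $G - s$ where $s$ is the source. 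The same argument inside $G - s$ gives $d(v_1, v_i) = i-1$ for $1 \leq i \leq n-1$. Moreover, since $s$ is a source it has no incoming arcs, so no directed path from $v_1$ can terminate at $s$, giving $d(v_1, s) = \infty$, distinct from all the finite values.

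For the \emph{necessity} direction, fix a resolving vertex $u$ and split into cases by whether some vertex is at infinite distance from $u$. Case A: every vertex is at finite distance from $u$. Since there are $n$ distinct nonnegative integer distances (including $d(u,u) = 0$) and the set of realised distances is downward closed (a vertex at distance $\ell > 0$ has a predecessor at distance $\ell - 1$), the distances must be exactly $\{0, 1, \dots, n-1\}$. Relabel vertices so $d(u, v_i) = i - 1$; the predecessor argument then produces an arc $v_i \to v_{i+1}$ for each $i$, which is the required Hamiltonian path. Finally, an arc $v_i \to v_j$ with $j > i+1$ would force $d(u, v_j) \leq d(u,v_i) + 1 = i < j-1$, contradicting $d(u,v_j) = j-1$. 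This gives (1). Case B: exactly one vertex $s$ has $d(u,s) = \infty$. Any in-neighbour of $s$ would lie at finite distance from $u$, making $d(u,s)$ finite, so $s$ is a source. Applying Case A inside $G - s$ to the remaining $n-1$ vertices yields the Hamiltonian path structure required by (2).

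The only delicate point is the forward direction in Case B: one must check that the Hamiltonian path obtained inside $G - s$ also has no skipping arcs when viewed in $G$, but this is immediate since the forbidden arcs in question lie entirely within $V(G) \setminus \{s\}$ and the argument uses only distances from $u$ in $G$, which coincide with distances in $G - s$ because no shortest path from $u$ to a vertex of $G - s$ can pass through the source $s$. I do not expect any genuine obstacle; the main care is in organising the two cases and the direction of arcs consistently.
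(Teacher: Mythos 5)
Your proof is correct and follows essentially the same approach as the paper's: probe from the initial vertex of the path, use the distinctness of the distances from that vertex together with the predecessor-at-distance-$(\ell-1)$ argument to force the consecutive Hamiltonian structure, and rule out skipping arcs via the shortcut they would create. You are in fact somewhat more thorough than the paper, whose proof only writes out the necessity direction and asserts without justification that the vertex at infinite distance is a source.
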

\begin{proof}
Let $G$ have vertices $v_1, v_2,\ldots, v_n$.
Suppose that a cop is on vertex $v_1$ and is able to capture the robber in one round.
Each distance $d(v_1,v)$ must be unique for every vertex $v$ in the graph.
We then have that either in the first case that $\{d(v_1, v_2), d(v_1,v_3), \ldots, d(v_1, v_n)\}$ is equal to $\{1,2,\ldots, n-2, n-1\}$  or $\{1,2,\ldots, n-3, n-2, \infty\}$ in the second case. We may assume without loss of generality that $d(v_1,v_i)=i-1$ for $i\in[n-1]$, and that $d(v_1,v_n)=n-1$ in the first case and $d(v_1,v_n)=\infty$ in the second case.

Let $P_i$ be the directed path of length $d(v_1,v_i)$ from $v_1$ to $v_i$. The second last vertex of this directed graph must have distance $d(v_1,v_i)-1 = d(v_1,v_{i-1})$ to the cop, and as each vertex has a unique distance to the cop, this vertex must be $v_{i-1}$.
If there was an arc from $v_i$ to $v_j$ with $i<j-1$, then there would be a path of length $$d(v_1,v_i)+1 = (i-1)+1 = i<j-1 =d(v_1,v_j)$$ from $v_1$ to $v_j$, which is a contradiction.

In the first case, the Hamiltonian path $(v_1, \ldots, v_n)$ satisfies the required conditions.
In the second case, the vertex $v_n$ is a source vertex, and after removing this, $(v_1, v_2,\ldots, v_{n-1})$ is the Hamiltonian path.
\end{proof}

Theorem~\ref{lem:MetricDim1} provides a class of digraphs with localization number $1$, but classifying all those digraphs with localization number $1$ remains open. We will show that the localization number of an acyclic digraph is 1.

For a digraph $G,$ an ordering of its vertices $x_1, x_2, \ldots, x_n$ is a \emph{topological sort} if for every arc $(x_i,x_j)$ in $G$ we have that $i < j$.  A digraph $G$ \emph{acyclic} if it has no cycles. Note that every acyclic digraph has a sink and a source and has a topological sort of its vertices; see, for example,~\cite{bang-jensen-gutin-2009}.

\begin{theorem} \label{thm:DAG1}
If $G$ be an acyclic digraph, then $\zeta(G) = 1$.
\end{theorem}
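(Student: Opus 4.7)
The plan is to give an explicit one-cop strategy based on a topological sort of $G$. Since $G$ is acyclic, I would fix a topological ordering $x_1,x_2,\dots,x_n$ of its vertices, so that every arc $(x_i,x_j)$ satisfies $i<j$. The cop's strategy will be the simplest possible: in round $i$, probe $x_i$.

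The correctness argument I would give proceeds by induction on $i$, with the invariant that after round $i$'s probe (assuming the robber has not yet been caught) the set of robber positions consistent with everything the cop has observed is contained in $\{x_{i+1},\dots,x_n\}$. There are two small things to verify in the inductive step. First, after the cop probes $x_i$, the distance $d_i$ returned is either $0$, in which case the robber is uniquely at $x_i$ and caught, or $d_i\neq 0$, in which case the cop learns $r\neq x_i$ and, combined with the previous invariant, concludes $r\in\{x_{i+1},\dots,x_n\}$. Second, when the robber then moves, the topological order forces any out-neighbor of a vertex $x_j$ with $j\geq i+1$ to have index strictly greater than $j$, and the robber may also stay put, so the new position still lies in $\{x_{i+1},\dots,x_n\}$; the invariant is preserved going into round $i+1$.

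Applying the invariant at $i=n-1$ gives that the set of positions consistent with all observations lies in $\{x_n\}$; since the robber occupies some vertex, this set equals $\{x_n\}$, and the robber is uniquely identified. Hence one cop suffices to win, showing $\zeta(G)\leq 1$, and the reverse inequality $\zeta(G)\geq 1$ is immediate.

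The main (and really only) subtlety is recognizing that the acyclicity of $G$, expressed through the topological sort, makes the robber's position monotonically non-decreasing along the ordering, so a vertex that has been ``cleared'' by a probe can never be revisited. This is precisely what lets a single cop sweep through $V(G)$ in sequence and still succeed; without it the obvious one-by-one strategy would clearly fail. Everything else is a routine unwinding of the definitions and does not require any case analysis or estimation.
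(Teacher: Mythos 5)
Your proof is correct and follows essentially the same approach as the paper's: a single cop sweeps the vertices in topological order, and acyclicity guarantees the robber's position only moves forward, so cleared vertices stay cleared. Your version just makes explicit, via the inductive invariant, the endgame that the paper leaves implicit (the robber being cornered at $x_n$, or more generally a sink).
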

\begin{proof}
Let $x_1, x_2, \ldots, x_n$ be a topological sort of $V(G)$, with $|V(G)|=n$. The strategy of the cop is to move iteratively to higher indexed vertices. In round $i$, where $i\ge 0$ is an integer, the cop probes vertex $x_i$.

If the robber resides on vertex $x_i$, then the robber must move to a vertex with a higher index before round $i$ to avoid capture. In this way, the cop can force the robber to move to a sink in at most $n$ rounds, where the robber will be captured.
\end{proof}

We next investigate two classes of digraphs whose ratio of localization number and order is high. The \emph{rotation-symmetric tournament} $T_{2\oddtreevar+1}$ has vertices in $[2\oddtreevar+1]$, with an edge from vertex $i$ to vertex $i+j$ for $j \in [\oddtreevar]$, where $i+j$ is taken modulo $2\oddtreevar+1$ (where we identify $2m+1$ with $0$).
Note that for any fixed $c$, the function mapping vertex $i$ to vertex $i+c$ for all $i \in [2\oddtreevar+1]$ is an automorphism, and so the rotation-symmetric tournament is vertex-transitive.

\begin{lemma}\label{lem:rotation-symmetric-tournaments}
For the rotation-symmetric tournaments $T_{2m+1}$, we have that
$$\zeta(T_{2m+1}) = \lfloor \oddtreevar/2 \rfloor+1  = (1+o(1))n/4.$$
\end{lemma}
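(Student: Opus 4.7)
The approach rests on two structural observations. First, $T_{2m+1}$ has diameter $2$: a cop at vertex $c$ sees every other vertex at distance $1$ or $2$, partitioning the cyclic vertex set into the singleton $\{c\}$, the forward arc $[c+1, c+m]$ (distance $1$), and the backward arc $[c+m+1, c+2m]$ (distance $2$), with all indices taken mod $2m+1$. Hence each cop contributes at most three boundaries to the cyclic order. Second, within a window $A$ of $L$ consecutive vertices, a single cop contributes at most two boundaries inside $A$ when $L \leq m+2$; a third boundary requires $L \geq m+3$ and the cop to sit in a specific sub-window of length $L-m-2$.

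For the upper bound $\zeta(T_{2m+1}) \leq \lfloor m/2 \rfloor + 1$, I would place $k = \lfloor m/2 \rfloor + 1$ cops at $\{0, 1, \ldots, k-1\}$ in round one. A direct computation of distance vectors shows that every vertex has a unique vector except the vertices in the arc $[k, m]$ (which all give $(1,1,\ldots,1)$) and those in $[m+k, 2m]$ (which all give $(2,2,\ldots,2)$); each collision arc has exactly $\lceil m/2 \rceil$ vertices. By rotational symmetry, assume the candidate set after round one is $[k, m]$. After the robber's move this expands to $N^+[[k, m]] = [k, 2m]$, an arc of length $2m - k + 1$. The cops then re-place in an analogous consecutive block adapted to the current arc, and by a careful recursive strategy the candidate arc shrinks at each round until it is a singleton.

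For the lower bound $\zeta(T_{2m+1}) \geq \lfloor m/2 \rfloor + 1$, I would exhibit a robber strategy defeating $k' = \lfloor m/2 \rfloor$ cops by maintaining the invariant that the candidate set $R_t$ is a cyclic arc of length at least two. Inductively, if $|R_{t-1}| = s \geq 2$, then $N^+[R_{t-1}]$ is an arc of length $s+m$; by the boundary count above, the cops' induced partition of this arc has at most $s + 2k' - 1$ parts when $s \leq k' + 2$, and at most $3k' + 1$ parts otherwise, both strictly less than $s + m$. So some part has size at least two, and the robber moves into it, preserving the invariant.

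The main obstacle in the lower bound is the reachability condition: the robber's out-neighborhood covers only $m+1$ of the $s+m$ vertices of $N^+[R_{t-1}]$, so $s - 1$ vertices are unreachable and could in principle contain the only size-$\geq 2$ sub-arc. I would resolve this by strengthening the invariant to also control the robber's position within $R_{t-1}$, for instance requiring a roughly central location so that the unreachable region splits into two arcs each too short to host a size-two good sub-arc. Careful bookkeeping of the robber's position across rounds, combined with the boundary count above, completes the argument.
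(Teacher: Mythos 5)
Your overall framework --- diameter~$2$, the three-part distance partition per cop, and boundary counting on cyclic windows --- is the same machinery the paper uses, and your boundary counts are correct. But both halves of your argument have genuine gaps.

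For the upper bound, the strategy is the entire content of the proof, and you have not exhibited one. Your round-one placement on $\{0,1,\ldots,k-1\}$ leaves a candidate arc of size $\lceil m/2\rceil$, so after the robber moves the window has length roughly $3m/2$. With $k=\lfloor m/2\rfloor+1$ cops each contributing at most three boundaries, the refinement has at most about $3m/2+4$ parts on a window of about $3m/2$ vertices --- there is essentially zero slack, every part would have to be a singleton, and it is not at all clear such a placement exists; in any case ``a careful recursive strategy shrinks the arc'' is an assertion, not a proof. The paper avoids this entirely by spacing the round-one cops four apart, which pins the robber to \emph{three} consecutive vertices, so the post-move window has length only $m+3$; an explicit placement at odd offsets $R+2s+1$ then finishes in at most two further rounds (with a separate third round needed when $m$ is odd). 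You would need either to find a round-one placement that leaves $O(1)$ candidates, or to prove your near-tight partition into singletons is achievable; as written the upper bound is unproven.

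For the lower bound, your counting is right but you have correctly identified the fatal obstacle and then not resolved it: with a candidate arc $R_{t-1}$ of length $s$, the window $N^+[R_{t-1}]$ has $s+m$ vertices of which only about $m+1$ are reachable from the robber's actual consistent positions, so the size-two part your pigeonhole produces may be unreachable. The ``keep the robber central'' fix does not obviously work (the unreachable region still has $s-1$ vertices, enough to hide a size-two part when $s$ is large), and chasing it leads you back to needing the count on the reachable sub-window alone. The clean resolution --- and the paper's --- is to collapse the invariant to $s=2$: maintain a single pair of \emph{adjacent} indistinguishable candidates $R,R+1$. Then the relevant window is exactly $N^+[R]\cup N^+[R+1]$, an arc of $m+2$ vertices all of which are reachable, each cop contributes at most two boundaries there (your own observation, since $m+2<m+3$), so the refinement has at most $2\lfloor m/2\rfloor+1\le m+1<m+2$ parts, and some reachable adjacent pair survives. (The paper phrases this as a domination argument showing cops may as well play inside the window, followed by the same parity count.) With that change your lower bound closes; without it, it does not.
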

\begin{proof}
The transitive tournament of order $3$ covers the case $m=1,$ so assume $\oddtreevar \geq 2$.
We show that the rotation-symmetric tournament $T_{2\oddtreevar+1}$ has localization number $\lfloor \oddtreevar/2 \rfloor+1$. We begin by deriving an upper bound by playing with $\lfloor \oddtreevar/2 \rfloor+1$ cops, and showing that the robber can be captured.

At the start of the game, after the robber chooses their vertex, we place the cops on vertices $4s$ for $0 \leq s \leq \lfloor \oddtreevar/2 \rfloor$.
The cop on vertex $4s$ will probe $1$, and the cop on vertex $4(s+1)$ will probe $2$ if and only if the robber is on one of the vertices $\{4s+1, 4s+2, 4s+3\}$.
Similarly, the cop on vertex $4\lfloor \oddtreevar/2 \rfloor$ probes $1$ and the cop on vertex $0$ probes $2$ if and only if the robber is on one of the vertices $\{v : v\geq 4\lfloor \oddtreevar/2 \rfloor\} \subseteq \{2\oddtreevar-1, 2\oddtreevar, 2\oddtreevar+1\}$. In either case, the cop knows that the robber is on one of three consecutive vertices. Otherwise, the robber is on a vertex of the form $4s$ and is captured.

Suppose the cop found the robber was on one of the vertices $\{R, R+1, R+2\}$. The robber moves, and so is now on one of the vertices in $\{R, R+1, \ldots, R+\oddtreevar+2\}$. The cops make their second move by placing a cop on vertices $R+2s+1$ for each $s \in \{0,1, \ldots, \lfloor \oddtreevar/2 \rfloor\}$. The cop on vertex $R+2s+1$ will probe $1$, and the cop on vertex $R+2(s+1)+1$ will probe $2$ if and only if the robber is on the vertex $R+2s+2$. The cop on vertex $R+2s+1$ will probe $0$ if and only if the robber is on vertex $R+2s$.
If $\oddtreevar$ is odd, then all these cops probe a distance of $2$ if and only if the robber is on vertex $R$.
If $\oddtreevar$ is even, then the cop on $R+\oddtreevar+1$ probes $1$ while all other cops probe a distance of $2$ if and only if the robber is on vertex $R$. Note also that in this case, the cop on vertex $R+\oddtreevar+1$ probes $0$ if and only if the robber is on $R+\oddtreevar+1$.

Therefore, the robber is captured if $\oddtreevar$ is even. If $\oddtreevar$ is odd, then the robber is captured if it is on $\{R,R+1, \ldots, R+1+2\lfloor \oddtreevar/2\rfloor\} = \{R,R+1, \ldots, R+\oddtreevar\}$. Assume that the robber is not captured, and so $\oddtreevar$ is odd and the robber is residing on one of the two consecutive vertices, $R'=R+\oddtreevar+1$ or $R'+1=R+\oddtreevar+2$.

The robber moves and is now on one of the vertices in $\{R',R'+1, \ldots, R'+\oddtreevar+1\}$. The cops make their third move by placing a cop on vertices $R'+2s+1$ for each $s \in \{0, 1,\ldots, \lfloor \oddtreevar/2 \rfloor\}$. As in the previous moves, the robber is captured if it is on $\{R',R'+1, \ldots, R'+1+2\lfloor \oddtreevar/2\rfloor\} = \{R', R'+1, \ldots, R'+\oddtreevar\}$. All cops probe a distance of $1$ if and only if the robber is on $R' + \oddtreevar + 1$. Therefore, the robber is captured on any of its moves. This proves the upper bound, as we have shown that $\lfloor \oddtreevar/2 \rfloor +1$ cops is sufficient to capture the robber.

We proceed to show that $\lfloor \oddtreevar/2 \rfloor +1$ cops are necessary to capture the robber. Suppose for the sake of contradiction that $\lfloor \oddtreevar/2 \rfloor$ cops are enough to capture the robber. Suppose that during some cop move, the cops cannot distinguish whether the cop is on vertex $R$ or $R+1$.
The robber moves to some vertex in $\{R, R+1,\ldots, R+\oddtreevar+1\}$.

There are two types of placements that the cops can make to distinguish these vertices.
The first type are those of the form $R -\oddtreevar+j$ for $1 \leq j \leq \oddtreevar-1$, which are those vertices that the robber could not have moved to from vertex $R$ or $R+1$. If a cop plays on vertex $R-\oddtreevar+j$ for $1 \leq j \leq \oddtreevar-1$, then it will probe a distance $1$ to all vertices in $A_j=\{R,R+1, \ldots, R+j\}$ and a distance $2$ to all vertices in $B_j=\{R+j+1, R+j+2,\ldots, R+\oddtreevar+1\}$, and so gives the ability to distinguish vertices in $A_j$ from those in $B_j$.
The second type of placements are those vertices that the robber could have moved to from vertex $R$ or $R+1$.
If a cop plays on vertex $R+j$ for $1 \leq j \leq \oddtreevar$, then it will probe a distance $0$ to $R+j$, a distance $1$ to all vertices in $\{R+j+1, R+j+2,\ldots, R+\oddtreevar+1\}$, and a distance $2$ to all vertices in
$\{R, R+1,\ldots, R+j-1\}$, and so gives the ability to distinguish vertices in $A_j$ from those in $B_j$, but also to distinguish vertex $R+j$ from those in $A_j\setminus \{R+j\}$.
Thus, the move on $R+j$ is strictly superior to the move on $R+j-\oddtreevar$, so we may assume the cop does not play on vertices in $\{R-\oddtreevar+1, R-m+2,\ldots, R-1\}$ in their strategy.

Therefore, there must be an optimal cop strategy where the cops only play on $\{R, R+1,\ldots, R+\oddtreevar+1\}$.
Under a cop strategy where the cops only play on $\{R, R+1,\ldots, R+\oddtreevar+1\}$, if there are two consecutive vertices $R+j$ and $R+j+1$ with $1 \leq j \leq \oddtreevar-1$ that do not contain a cop, then these two vertices cannot be distinguished by the cops.
When $\oddtreevar$ is odd, the only way of doing this with $\lceil \oddtreevar/2\rceil$ or fewer cops is by placing $\lceil \oddtreevar/2\rceil$ cops on $\{R + 2 + 2s : 0 \leq s \leq \lfloor \oddtreevar/2 \rfloor-1\}$.
However, now all cops have distance $2$ to both $R$ and $R+1$, and so these two consecutive vertices cannot be distinguished.
For the case that $\oddtreevar$ is even, there are two ways to ensure that any two consecutive vertices in $\{R+1, R+2,\ldots, R+\oddtreevar+1\}$ contains a cop,
by placing $\lceil \oddtreevar/2\rceil$ cops on $\{R + 1 + 2s : 0 \leq s \leq \lfloor \oddtreevar/2 \rfloor-1\}$, or by placing $\lceil \oddtreevar/2\rceil$ cops on $\{R + 2 + 2s : 0 \leq s \leq \lfloor \oddtreevar/2 \rfloor-1\}$.
In the first case, all cops have distance $1$ to the consecutive vertices $R+\oddtreevar$ and $R+\oddtreevar+1$, and in the second case, all cops have distance $2$ to the consecutive vertices $R$ and $R+1$.
In all cases, we found two consecutive vertices that the cops cannot distinguish. Therefore the robber may, in each round, move to one of the two consecutive vertices that the cop will not be able to distinguish on their next move. The robber can avoid capture indefinitely using this strategy, which completes the contradiction.
\end{proof}

The metric dimension of strong tournaments of order $n$ was shown in \cite{loz} to be bounded above by $\lfloor n/2 \rfloor$.
We give general upper bounds on the localization number of tournaments.

\begin{theorem}
Let $T$ be a tournament and let $T'$ be the maximal strong subtournament of $T$ with maximum localization number. We then have that
\[ \zeta(T) \leq \zeta(T')+1.
\]
In particular,
\[ \zeta(T) \leq \lfloor n/2 \rfloor,
\]
except when $T$ can be formed by adding a sink to a strong, even order tournament $T'$ that has $\zeta(T')=|T'|/2$, and so $\zeta(T) \in  \{|T'|/2,|T'|/2+1\} = \{(n-1)/2, (n+1)/2\}$.
\end{theorem}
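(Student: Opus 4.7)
The plan is to exploit that in a tournament the strong components have a unique topological order. Let the strong components of $T$ be $C_1, C_2, \ldots, C_k$, ordered so that for every $i < j$, every vertex of $C_i$ has an arc to every vertex of $C_j$. Two consequences I will use repeatedly: the index of the strong component containing the robber is non-decreasing throughout the game, and for any $u, v \in C_i$, every $u,v$-path in $T$ must stay inside $C_i$ (leaving would be a one-way trip to a later component), so $d_T(u,v) = d_{C_i}(u,v)$.

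To show $\zeta(T) \leq \zeta(T') + 1$, I will give a phased strategy with $\zeta(T') + 1$ cops. In phase $i$ the cops play $\zeta(C_i)$ cops inside $C_i$ following an optimal winning strategy for $C_i$ (valid because of the distance identity above), together with one extra cop placed, when $i < k$, at an arbitrary vertex $u \in C_{i+1}$. The extra cop's probe returns $\infty$ if and only if the robber currently lies in $C_i$; otherwise the robber has already moved to $C_{i+1} \cup \cdots \cup C_k$, and the cops discard the phase-$i$ strategy and switch to phase $i+1$, restarting the localization strategy of $C_{i+1}$ from scratch. In phase $k$ the extra cop is unnecessary since the robber is trapped in $C_k$. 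Each phase either captures the robber in $C_i$ in finitely many rounds or strictly advances the phase index, so the game terminates using at most $\max_i \zeta(C_i) + 1 = \zeta(T') + 1$ cops.

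For $\zeta(T) \leq \lfloor n/2 \rfloor$ I case split on the structure of $T$. If $T$ is strong, the bound is immediate from $\zeta(T) \leq \beta(T) \leq \lfloor n/2 \rfloor$ using the metric-dimension result of \cite{loz}. Otherwise, the first inequality combined with the same bound on $T'$ gives $\zeta(T) \leq \zeta(T') + 1 \leq \lfloor |T'|/2 \rfloor + 1$, which already yields $\zeta(T) \leq \lfloor n/2 \rfloor$ whenever $|T'| \leq n - 2$. The remaining case is $|T'| = n - 1$, in which $T$ has exactly two strong components and the extra vertex $w$ sits at one end of the topological order, so it is either a source or a sink. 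If $w$ is a source, I can save one cop: running the $\zeta(T')$-cop strategy entirely inside $T'$, all probes return $\infty$ exactly when the robber is at $w$ (since $w$ has no in-neighbors), giving $\zeta(T) \leq \zeta(T') \leq \lfloor n/2 \rfloor$. If $w$ is a sink, the bound $\zeta(T) \leq \lfloor (n-1)/2 \rfloor + 1$ equals $\lfloor n/2 \rfloor$ when $n$ is even, and only when $n$ is odd with $\zeta(T') = |T'|/2$ does it slip to $(n+1)/2$, producing exactly the exception stated.

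The main obstacle is the asymmetry between source and sink. With a sink, every cop inside $T'$ sees distance $1$ to a robber on $w$, which is indistinguishable from a robber at a common out-neighbor inside $T'$, so the save-a-cop trick available for sources fails. Establishing the source case cleanly, and verifying that the $T'$ strategy can be correctly restarted after the cops infer from all-$\infty$ probes that the robber has been sitting at $w$, is the most delicate part of the argument; the phase analysis itself is driven by the tournament structure and is routine once the distance identity $d_T = d_{C_i}$ on same-component pairs has been observed.
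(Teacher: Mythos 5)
Your proposal is correct and takes essentially the same route as the paper: a phased sweep of the strong components with one sentinel cop on the next component, whose $\infty$-versus-finite probe detects whether the robber has left, followed by the same counting and the same case analysis around a strong component of order $n-1$. Two small remarks: your choice to run an optimal localization strategy inside each component is exactly what the stated bound $\zeta(T)\le\zeta(T')+1$ requires (the paper instead places a metric basis in each phase, which literally yields $\max_i\beta(T_i)+1$), and the ``restart after all-$\infty$ probes'' step you flag as delicate in the source case is a non-issue, since an all-$\infty$ distance vector already has $w$ as its unique candidate and therefore constitutes capture, so the robber can never occupy $w$ at a probe and the $T'$-strategy is never interrupted.
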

\begin{proof}
Let $T_1, \ldots, T_m$ be the set of maximal strong components of $T$, with indices chosen so that if $u \in T_i$ and $v \in T_j$ for $i<j$ then there is a directed arc from $u$ to $v$.
Starting at $i=1$ and increasing $i$ until $i=m-1$, on round $i$ the cops place $\beta(T_i)$ cops on a metric basis of $T_i$ and one cop on $T_i+1$.
In round $m$, the cops place $\beta(T_m)$ on a metric basis of cops on $T_m$.

On round $i$, the $\beta(T_i)$ cops will find that the robber is either on a single vertex of $T_i$ or that the robber is on some $T_j$ with $j>i$. If $i\neq m$, then the single cop distinguishes whether the robber is on $T_i$ (as it probes infinity) or is not on $T_i$ (as it probes a finite value). As such, the robber is either captured on $T_i$, or the cops know the robber is on $T_j$ for $j>i$.
Note that when $i=m$, the $\beta(T_m)$ cops will capture the robber.

The method in the previous paragraph requires $\max_i\{\beta(T_i)\}+1 \leq \max_i\{\lfloor|V(T_i)|/2\rfloor\}+1$ cops when $m>1$. This is at most $\lfloor n/2 \rfloor$ when the exceptional case does not occur. The case when $m=1$ follows immediately since the metric dimension of a strong tournament is $\lfloor n/2 \rfloor$.
\end{proof}

In the introduction, we discussed the digraph $D_3(i)$ whose localization number was 1/3 of its order.
Using the tournaments in Lemma~\ref{lem:rotation-symmetric-tournaments}, we may find digraphs of order $n$ with localization number asymptotically 1/2 of their order.

\begin{theorem}\label{teg}
If $n=m^2$ with $m$ an odd integer, then
there is a directed graph $G$ of order $n$ such that
$$\zeta(G)=n/2-\sqrt{n}+3/2 = (1-o(1))n/2.$$
\end{theorem}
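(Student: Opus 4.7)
The plan is to take $G$ to be the composition $T_m[\overline{K_m}]$, where $T_m$ is the rotation-symmetric tournament on $m$ vertices (in the notation of Lemma~\ref{lem:rotation-symmetric-tournaments}, this is $T_{2k+1}$ with $k=(m-1)/2$), and each vertex $v_i$ of $T_m$ is replaced by an independent blob $B_i$ of size $m$, with a complete set of arcs from every vertex of $B_i$ to every vertex of $B_j$ whenever $v_i \to v_j$ in $T_m$. This is a strongly connected digraph of order $m^2=n$ whose distances take a very restricted form: two vertices in distinct blobs $B_i,B_j$ lie at distance $d_{T_m}(v_i,v_j)\in\{1,2\}$ (since $T_m$ has diameter two for $k\ge 1$), while two distinct vertices of the same blob are at distance $3$ (realized via any directed triangle of $T_m$ through $v_i$). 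The structural consequence is that a cop in $B_j$ assigns identical probes to every vertex of any other blob $B_i$, and inside $B_j$ can only separate its own vertex (probe $0$) from the remaining $m-1$ vertices (probe $3$); in particular, distinguishing all vertices of a given blob in a single round requires at least $m-1$ cops in that blob. The target value is $K^{\ast}:=(m-1)^2/2+1=k(m-1)+1$, which equals $n/2-\sqrt{n}+3/2$.

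For the upper bound $\zeta(G)\le K^{\ast}$, I would play with $K^{\ast}$ cops as follows. In round $1$, place a single cop in each of the $m$ blobs (possible since $K^{\ast}\ge m$ for $m\ge 3$); either some cop probes $0$ and captures the robber, or exactly one cop probes $3$ and identifies the robber's blob $B_j$ up to $m-1$ candidate vertices. In each round $r\ge 2$, place $m-1$ cops on distinct vertices of each out-neighbor blob $B_{j+1},\dots,B_{j+k}$ and a single cop on a not-yet-probed vertex of $B_j$; the total is exactly $k(m-1)+1=K^{\ast}$. If the robber ever moves into some out-neighbor $B_{j+s}$, the $m-1$ cops there pin down the exact vertex via the pattern of probes; if the robber stays in $B_j$, the single cop there eliminates one more candidate vertex each round, and the robber is localized after at most $m-1$ further rounds.

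For the lower bound $\zeta(G)\ge K^{\ast}$, suppose the cops play with only $K\le k(m-1)$ cops, and I would describe a robber strategy maintaining the invariant that after each round's probe there are two ghost positions $u_r,u'_r$ lying in a common blob $B_{i_r}$ with neither occupied by a cop of $A_r$; by the structural facts above such a pair shares a distance vector and is therefore indistinguishable. Initially, since $K<m(m-1)$, some blob holds at most $m-2$ cops of $A_1$, and the robber places $u_0,u'_0$ there. In each later round the robber uses the cops' deterministic strategy to compute $A_{r+1}$ in advance and considers the $k+1$ blobs $B_{i_r},B_{i_r+1},\dots,B_{i_r+k}$ reachable from the two ghosts in one step (including staying); because $(k+1)(m-1)=K+(m-1)>K$, at least one of these blobs contains at most $m-2$ cops of $A_{r+1}$. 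If that blob is an out-neighbor, both ghosts move to two uncovered vertices of it (the complete inter-blob arcs allow any move); if every out-neighbor blob instead carries $\ge m-1$ cops, those already absorb all $K$ cops, so $B_{i_r}$ is cop-free and the robber simply stays at $u_r,u'_r$. The main obstacle, which I would treat most carefully, is this "stay" case: because $B_{i_r}$ is an independent set the robber cannot swap the two ghost identities to new vertices inside the blob, and the argument relies on the tight inequality $(k+1)(m-1)-K\ge m-1$ to force $B_{i_r}$ to be cop-free whenever all $k$ out-neighbor blobs are blocked, thereby keeping the invariant alive and exhibiting a winning strategy for the robber against any $K<K^{\ast}$ cops.
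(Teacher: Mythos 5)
Your construction (blow-up of the rotation-symmetric tournament by independent sets of size $m$), the target value $(m-1)^2/2+1$, the upper-bound strategy (one scout per blob, then $m-1$ cops saturating each out-neighbor blob plus one scanner in the robber's blob), and the lower-bound counting (if every out-neighbor blob holds $m-1$ cops then the robber's own blob is cop-free, otherwise two uncovered indistinguishable vertices exist in some reachable blob) are all essentially identical to the paper's proof of Theorem~\ref{teg}. The only cosmetic difference is that you fix the blob size equal to the tournament order from the start, whereas the paper proves the general formula $(k-1)j+1$ and then optimizes the parameters.
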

\begin{proof}
Consider the rotation-symmetric tournament $T_{2j+1}$ and replace each vertex $v$ by an independent set $I_v$ of size $k\geq 3$, with an edge from $x\in I_u$ to $y \in I_v$ exactly when there is an edge from $u$ to $v$ in the rotation-symmetric tournament. Label the resulting digraph as $G$.
We will show that $(k-1)j+1$ cops are both necessary and sufficient to capture the robber on $G$.

We begin by showing sufficiency, and play with $(k-1)j+1$ cops.
For the first cop move, the cop can place one cop on each independent set, using $2j+1\leq (k-1)j+1$ cops. The unique cop that is in the same independent set as the robber will probe a distance of $0$ or $3$, and all other cops will probe a distance of $1$ or $2$. Therefore, the cops know the independent set that the robber is in, say independent set $I_1$ without loss of generality.
The robber may move to any vertex in $\bigcup_{2 \leq i  \leq j+1} I_i$, or stay on its current vertex.
The cops may then move $k-1$ cops on each independent set $I_i$ with $i \in \{2, \ldots, j+1\}$. The cops also move an additional cop on an unvisited vertex of $I_1$. If the robber moves to a vertex in $\bigcup_{2 \leq i  \leq j+1} I_i$, then some cop will either probe $0$ (so the robber is captured), or all cops on the same independent set as the robber will probe $3$. In the latter case, the robber is captured on the unique vertex of this independent set that does not contain a cop.
Therefore, the robber is captured unless it stays on $I_1$. The cops repeat this technique, with the one cop on $I_1$ scanning over each vertex in $I_1$ until all have been visited, at which point the robber must be captured.

To show necessity, we play with just $(k-1)j$ cops, and show that the robber can avoid capture indefinitely. Suppose for the sake of contradiction that after the cops' second last move, the cops knew the robber was on some independent set, say $I_1$, but could not distinguish which of two vertices $u,u' \in I_1$ that the robber was on, and that the cops can capture the robber on the next cop move. Note that $G$ is vertex-transitive, so the following argument works on any robber location. The robber may move to any vertex in $\bigcup_{2 \leq i  \leq j+1} I_i$.
If any $I_i$ for $2 \leq i \leq j+1$ contain fewer than $k-1$ cops on the next cop move, then there will be two vertices in $I_i$ that the robber can occupy but that the cops cannot distinguish. Therefore, there must be $k-1$ cops on these $j$ independent sets.
There are no further cops to be utilised, and since no cop outside of $I_1$ can distinguish $u,u' \in I_1$, there are two vertices in $I_1$ that the cops cannot distinguish, and so the robber is not captured.

Thus, $(k-1)j+1$ cops are required to capture the robber, and $(k-1)j$ cops are insufficient to do so.
With $n=k(2j+1)$, and by choosing the optimal values of $k=\sqrt{n}$ and $2j+1=\sqrt{n}$, the result follows.
\end{proof}

In undirected graphs, we have that for complete graphs $\zeta(K_n) = \beta(K_n) = n-1$. For digraphs, however, there is no obvious example of such a large localization number (recall that we take our directed graphs to be oriented). A result of \cite{chartrand-raines-zhang-2000} showed that there is a family of digraphs of order $n$  with metric dimension $n-3$. However, these graphs have localization number at most $2$. We showed that there are families of digraphs with $\zeta(G_n) = (1-o(1))n/2$, but there are no obvious analogues to the examples given in the other cases. We conjecture that there exists a constant $1/2 \le \varepsilon < 1$ such that for digraphs $G$ on $n$ vertices, $\zeta(G) \le \varepsilon n.$

\subsection{Upper bounds using strong components}

For a digraph $G$, we may contract the strong components to single vertices and delete any resulting parallel edges to create the \textit{strong component digraph} $\mathrm{SC}(G)$. The strong components digraph of $G$ is acyclic and thus, has a topological sort. We say a strong component of $G$ is $\emph{terminal}$ if its corresponding vertex in $\mathrm{SC}(G)$ has out-degree $0$. We say a strong component of $G$ is $\emph{source}$ if its corresponding vertex in $\mathrm{SC}(G)$ has in-degree $0$. A strong component $G_j$ is a \emph{child} of the strong component $G_i$ if there exists an arc from a vertex in $G_i$ to a vertex in $G_j$, and we say $G_j$ is a \emph{descendant} strong component of $G_i$ in $G$ if there existed a directed path from the vertex of $G_i$ to the vertex of $G_j$ in $\mathrm{SC}(G)$. Label the vertices in $G_i$ and all descendent strong components of $G_i$ as $\text{Desc}[G_i]$. If $G$ is a digraph, then $\scmax^+(G)$ is the maximum out-degree of $G.$

\begin{theorem} \label{thm:ub-sc} 
Let $G_1, G_2, \ldots G_k$ be the strong components of $G$ and $\mathrm{SC}(G)$ be the strong component digraph of $G$. If $\scmax = \Delta^+(\mathrm{SC}(G))$, then $$\zeta(G) \le \max_{1\le i \le k} \zeta(G_i) + \scmax.$$
\end{theorem}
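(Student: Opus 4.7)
The plan is to proceed by induction on the number $k$ of strong components of $G$. The base case $k=1$ is immediate since $\Delta=0$ and $\zeta(G)=\zeta(G_1)$. For the inductive step, I would pick a source component $G_1$ of $\mathrm{SC}(G)$ (one whose vertex in $\mathrm{SC}(G)$ has in-degree zero) and set $G' = G - V(G_1)$. The strong components of $G'$ are $G_2,\ldots,G_k$, and removing a vertex from a DAG cannot increase any out-degree, so $\Delta^+(\mathrm{SC}(G'))\le\Delta$; the induction hypothesis then yields $\zeta(G')\le\max_{i\ge 2}\zeta(G_i)+\Delta\le M$, where $M=\max_i\zeta(G_i)+\Delta$.

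Next I would exhibit a winning $M$-cop strategy on $G$ in two phases. In phase one, $\zeta(G_1)$ cops execute the winning $G_1$-strategy on the induced subgraph $G_1$, while up to $\Delta$ \emph{watcher} cops are placed, one on a fixed vertex of each child of $G_1$ in $\mathrm{SC}(G)$. Two facts drive the analysis. First, because $G_1$ is a source, no path in $G$ enters $G_1$ from outside, so distances between vertices of $V(G_1)$ in $G$ coincide with those in $G_1$; hence the $G_1$-probes observed in $G$ play out exactly as in an isolated $G_1$-game. Second, the combined probe vector classifies the robber's region each round: a robber in $G_1$ yields $\infty$ at every watcher and a finite value at every $G_1$-cop (by strong connectivity of $G_1$); a robber in a descendant of $G_1$ outside $G_1$ makes at least one watcher probe a finite value, since every such descendant is reachable from some child of $G_1$; and a robber in a component unreachable from $G_1$ makes every $G_1$-cop probe $\infty$. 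If at any round the probes indicate the robber is not in $G_1$, the cops immediately switch to phase two; otherwise the $G_1$-strategy terminates with a unique candidate $r\in V(G_1)$, and the same classification confirms this candidate is also unique in $G$.

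Phase two is then straightforward: the cops know the robber lies in $V(G')$, and because $G_1$ is a source the robber can never return to $G_1$, while distances in $G$ between vertices of $V(G')$ coincide with distances in $G'$. Consequently, the $M$-cop winning strategy on $G'$ supplied by the induction hypothesis succeeds on $G$ as well. The main subtlety, and the step I expect to require the most care, is the ``unique candidate'' claim closing phase one: I will have to verify that no vertex $r'\notin V(G_1)$ can share the probe vector of $r\in V(G_1)$, using that the watchers probing $\infty$ force $r'$ out of every descendant of $G_1$, while the $G_1$-cops probing finite values force $r'$ to be reachable from $G_1$ --- two requirements that are jointly incompatible for any $r' \notin V(G_1)$.
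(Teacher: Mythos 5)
Your proposal is correct and is essentially the paper's own argument: the paper processes the strong components iteratively in topological order, in each phase placing $\zeta(G_i)$ cops inside the current component (using that distances within a strong component agree with distances in $G$) plus one watcher cop on each child component to detect, via a finite probe, that the robber has left. Your induction on a source component simply unrolls to that same phased strategy, so the two proofs coincide in substance.
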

\begin{proof}
We follow the cop strategy of Theorem~\ref{thm:DAG1} on $SC(G)$. Let $x_1, x_2,\ldots, x_m$ be a topological sort of the vertices of $SC(G)$, where each $x_i \in SC(G)$ corresponds to a strong component $G_i$ in $G$. In particular, if there is a directed edge between a vertex of $G_i$ and a vertex of $G_j$, then $i < j$.
The cops play in $m$ distinct phases, focusing on the strong component $G_i$ in phase $i$.
In phase $i$, we show that the cops play to discover if the robber is on $G_i$, and then in addition to make the robber leave $G_i$ (or be captured) if it is on $G_i$.
The robber can only move to a strong component with a higher index than it is currently on, and so at the start of phase $i$, the robber must be on a strong component with index $i$ or larger.
In this way, the cop can capture the robber by the end of phase $m$.

Now suppose we enter phase $i$.
We will play $\zeta(G_i)$ cops on $G_i$ and at most $\scmax$ cops outside of $G_i$.
The $\zeta(G_i)$ cops on $G_i$ play by ignoring candidate vertices outside of $G_i$.
It is important to note here that since $G_i$ is a maximal strongly connected component of $G$, no path in $G$ from $u$ to $v$ with $u,v \in V(G_i)$ contains a vertex outside of $V(G_i)$, and so $d_{G_i}(u,v) = d_G(u,v)$.
That is, playing the Localization game on the digraph $G_i$ (not taking $G_i$ as being a subgraph of $G$, but being its own digraph) is identical to playing the Localization game on $G$, where both cops and robber restrict themselves to playing on the vertices $V(G_i)$ in $G$.

In addition, for all rounds in this phase, one cop is placed on a vertex in each child strong component of $G_i$.
Note that if a cop on a child component $G_j$ of $G_i$ probes a finite distance, then the robber is not on $G_i$, since there is no path from a vertex of $G_j$ to a vertex of $G_i$.
As such, the robber is known to not reside on $G_i$, and phase $i$ immediately ends.
As such, if the robber ever moves off from $G_i$ (or was never on $G_i$), then phase $i$ ends on the next cop move. Otherwise, we may assume the robber restricts itself to playing on the vertices $V(G_i)$ in $G$, and so is captured by the $\zeta(G_i)$ cops on $G_i$.
\end{proof}

The bound in Theorem~\ref{thm:ub-sc} is tight, although in general it can be arbitrarily far from the truth. For tightness, we provide a class of digraphs that meet the bound.

\begin{theorem}\label{thm:tight-sc} 
For each odd $\oddtreevar$ and $\scmax k\leq \frac{\oddtreevar+1}{2}$, there is a digraph $G$ of order $(2 \oddtreevar+1) \scmax$ vertices with strong components $G_1, G_2, \ldots G_{\scmax+1}$ and $\scmax = \Delta^+(\mathrm{SC}(G))$, such that \[\zeta(G) = \max_{1\le i \le \scmax +1} \zeta(G_i) + \scmax.
\]
\end{theorem}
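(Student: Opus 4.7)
The plan is to realize tightness by taking $\mathrm{SC}(G)$ to be a star with center a trivial source strong component and $\scmax$ leaves equal to copies of the rotation-symmetric tournament. Concretely, I would let $G_{\scmax+1}$ be a single vertex $v_0$, let $G_1,\dots,G_\scmax$ be $\scmax$ vertex-disjoint copies of $T_{2m+1}$, and add an arc from $v_0$ to every vertex of every $G_i$. Since $m$ is odd, Lemma~\ref{lem:rotation-symmetric-tournaments} gives $\zeta(G_i) = \lfloor m/2\rfloor + 1 = (m+1)/2$ for $i \le \scmax$, so $\max_i \zeta(G_i) = (m+1)/2$, and $\Delta^+(\mathrm{SC}(G)) = \scmax$. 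The upper bound $\zeta(G) \le (m+1)/2 + \scmax$ is immediate from Theorem~\ref{thm:ub-sc}.

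For the lower bound I would prove that $k := (m+1)/2 + \scmax - 1$ cops are insufficient to capture the robber. The key structural observations are that a cop placed in child $G_i$ probes $\infty$ to every vertex outside $G_i$ (since the children are pairwise mutually unreachable), and a cop placed at $v_0$ probes exactly $1$ to every vertex of every child, so $v_0$-cops cannot distinguish any two child-vertices from one another. Consequently, localizing the robber inside a child $G_i$ requires at least $\zeta(G_i) = (m+1)/2$ cops physically inside $G_i$. Writing $x_i^{(t)}$ for the number of cops in $G_i$ in round $t$, we have $\sum_{i=1}^{\scmax} x_i^{(t)} \le k$; since $\scmax(m+1)/2 > k$ for $m \ge 3$, pigeonhole forces some $x_{i_t}^{(t)} < (m+1)/2$ in every round.

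The main obstacle is that the robber cannot move between children and must physically commit to one component $G_{i^*}$, while the cops can identify the robber's child after a detection round and then concentrate $(m+1)/2$ cops there. To defeat this, the robber's strategy must keep multiple children simultaneously viable in the candidate set. I would have the robber maintain a shadow candidate in an alternate child $G_{j_t}$ with $x_{j_t}^{(t)} = 0$, exploiting that cops outside both $G_{i^*}$ and $G_{j_t}$ cannot separate a vertex of $G_{i^*}$ from a vertex of $G_{j_t}$. In rounds where the cops cover every child, the robber falls back on the pair-of-vertices dodge from the lower-bound half of Lemma~\ref{lem:rotation-symmetric-tournaments} inside the under-localized child $G_{i_t}$. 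Verifying that these two regimes can be threaded consistently across all rounds, so that the robber's physical moves carry a valid candidate pair from round $t$ to round $t+1$, is the technical crux of the proof; the hypothesis $\scmax \le (m+1)/2$ is used to rule out cop strategies that simultaneously defeat both regimes, leveraging the vertex-transitivity of $T_{2m+1}$ to align the robber's shifted pair with whichever child is left under-watched.
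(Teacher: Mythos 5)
Your construction does not work: the lower bound $\zeta(G)\ge (m+1)/2+\scmax$ fails for the star with a trivial source. On your digraph, $\max(\scmax,(m+1)/2)=(m+1)/2$ cops already win. In round~1 place one cop in each of the $\scmax$ children (possible since $\scmax\le(m+1)/2$); the children are pairwise unreachable and $v_0$ has no in-arcs, so exactly one cop probes a finite distance and thereby names the robber's component (and if all probe $\infty$, the robber is pinned at $v_0$). Since the robber can never leave its child, from round~2 onward the cops simply run the $(m+1)/2$-cop winning strategy of Lemma~\ref{lem:rotation-symmetric-tournaments} inside that one known copy of $T_{2m+1}$. Hence $\zeta(G)=(m+1)/2=\max_i\zeta(G_i)$ and the $+\scmax$ is lost entirely. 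Your proposed repair --- keeping a ``shadow candidate'' in a cop-free child --- cannot survive this: the hypothesis $\scmax\le(m+1)/2$ guarantees the cops can cover \emph{every} child in a single round, which collapses all shadows permanently, and thereafter only one child is a viable location, so there is never an ``under-localized viable child'' in which to run the pair-dodge. The obstacle you flagged as the technical crux is in fact fatal to this decomposition.

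The paper avoids this by making the \emph{source} component the hard one and wiring its out-arcs so that leaving it is undetectable from inside. Its vertex set is $\mathbb{Z}_{2m+1}\times[\scmax+1]$; each $V_i$ induces $T_{2m+1}$, $V_1$ is the source, and $(u,1)$ has arcs to $(u+j,v_2)$ for all $j\in[m]$ and all $v_2$. Consequently $d\bigl((u,1),(w,j)\bigr)=d\bigl((u,1),(w,1)\bigr)$ for every child $j$: a cop stationed in the source cannot tell whether the robber is still at $w$ in the source or has already jumped to the clone of $w$ in child $j$, and a cop in child $i\ne j$ probes $\infty$ to both. So while the robber sits in the source, ruling out each of the $\scmax$ children costs one dedicated cop per child, \emph{simultaneously} with the $(m+1)/2$ cops needed inside the source to separate adjacent vertices of $N^+[(1,1)]$ (an $(m+1)$-element set that $(m-1)/2$ cops cannot dominate in pairs). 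That simultaneity is exactly what your star lacks, because in your graph the ``which component?'' question and the ``which vertex within it?'' question can be answered in separate rounds with reused cops.
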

\begin{proof}
Consider the graph on vertices $\mathbb{Z}_{2\oddtreevar+1} \times [\scmax+1]$, where an arc connects $(u_1,v_1)$ to $(u_1+j,v_2)$ for each $1 \leq j \leq \oddtreevar$ whenever $v_1 = 1$ or $v_1 = v_2$ (with $u_1 + j$ taken modulo $2\oddtreevar+1$).
That is, each set of vertices $V_i=\{(u,i) : u \in \mathbb{Z}_{2\oddtreevar+1}\}$ induces a copy of $T_{2\oddtreevar+1}$ in $G$.
Further, $G[V_i] = G_i$ is a strong component of $G$, $N^+((u,1)) = N^+_{G_1}(u) \times [\scmax+1]$, and $N^+((u,j)) = N^+_{G_1}(u) \times \{j\}$.

For this digraph, $\zeta(G_1) = \zeta(G_i)$ for all $1 \leq i \leq \scmax+1$. As we have the upper bound from Theorem \ref{thm:ub-sc}, we must show that $$\zeta(G) \geq \max_{1\le i \le k} \zeta(G_i) + \scmax = \zeta(G_1) + \scmax.$$
Recalling that we have assumed $\oddtreevar$ is odd,  Lemma~\ref{lem:rotation-symmetric-tournaments} yields that $\zeta(G_1)=\frac{\oddtreevar+1}{2}$.
Therefore, for the sake of contradiction, assume that $\frac{\oddtreevar+1}{2} + \scmax-1$ cops play on $G$, and are able to capture the robber.
We will analyze the last move that the cops made in order that the robber was either captured or was forced to move off from $G_1$ in order to avoid capture.

Suppose that the robber was on vertex $(1,1)$, which can be assumed without loss of generality since $T_{2\oddtreevar+1}$ is vertex-transitive.
By a straightforward modification to the argument found in Lemma \ref{lem:rotation-symmetric-tournaments}, we can show that for each vertex $u$ in $G_1$ that is not in $N^+_{G_1}[(1,1)]$ is a strictly inferior move for a cop compared to playing on the vertex $u+\oddtreevar$ or $u+\oddtreevar+1$ in $N^+_{G_1}[(1,1)]$.
As such, we may assume the cop only plays on vertices in $N^+_{G_1}[(1,1)]$ when playing on $G_1$.
As we are playing with $\frac{\oddtreevar+1}{2} + \scmax-1 \leq \frac{\oddtreevar+1}{2}+\frac{\oddtreevar+1}{2} - 1 = \oddtreevar$ cops, we cannot place a cop on each vertex $N^+_{G_1}[(1,1)]$, since there are $\oddtreevar+1$ vertices in this set. Thus, there must be some vertex $(w,1) \in N_{G_1}[(1,1)]$ that did not contain a cop during this cop move.
Note that if the cops can distinguish $(w,1)$ from $(w,j)$, then it must be that there is a cop on the strong component $G_j$.
To see this, note that any cop on strong component $G_i$ with $i \notin \{1,j\}$ will probe an infinite distance to any vertex in $G_j$ and that the distance from a cop in $G_1$ to $(w,1)$ and $(w,j)$ is always the same.

Therefore, each of the $\scmax$ strong components $G_j$, with $2 \leq j \leq \scmax+1$, must contain a cop.
This means that at most $(\frac{\oddtreevar+1}{2} + \scmax-1) - \scmax = \frac{\oddtreevar-1}{2}$ cops were on $G_1$ during the cops' last move.
However, on $G_1$, the set $N^+_{G_1}[(1,1)]$ contains $\oddtreevar+1$ consecutive vertices in $G_1$. There is no way to place $\frac{\oddtreevar-1}{2}$ cops on $\oddtreevar+1$ vertices so that any two adjacent vertices contain a cop. In particular, note the best that we could do is either to place the cops on $\{(1+2b,1) : 0 \leq b \leq \frac{\oddtreevar-1}{2}\}$ or to place the cops on $\{(2+2b,1) : 0 \leq b \leq \frac{\oddtreevar-1}{2}\}$.
Hence, $G_1$ contained two adjacent vertices that did not contain a cop in the cops' last move. These two vertices cannot be distinguished by any of the cops on $G$, which is a contradiction.
\end{proof}

Theorems~\ref{thm:ub-sc} and \ref{thm:tight-sc} show that both the internal structure of the strong components of a digraph along with how those strong components are connected may increase the localization number of the digraph.
The structure of how a strong component is connected to the rest of the digraph may also decrease the localization number. This phenomenon was mentioned by \cite{chartrand-raines-zhang-2000}, although we give a more comprehensive example.

\begin{lemma}
For any digraph $D$ on $m$ vertices, there exists a digraph $G$ that contains $D$ such that $\beta(G) \leq \lceil \log_2(m)\rceil$.
\end{lemma}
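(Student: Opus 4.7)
The plan is to adjoin $k = \lceil \log_2 m \rceil$ new ``cop'' vertices to $D$ that encode the vertices of $D$ by their binary labels. Label $V(D) = \{v_1, \ldots, v_m\}$ and choose an injection $j \mapsto (b_1(j), \ldots, b_k(j)) \in \{0,1\}^k$, which exists since $2^k \geq m$. Form $G$ from $D$ by adding new vertices $c_1, \ldots, c_k$ together with the arcs $(c_i, v_j)$ for every pair $(i,j)$ with $b_i(j) = 1$. I will show that $\{c_1, \ldots, c_k\}$ is a resolving set for $G$, which immediately yields $\beta(G) \leq k$.

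The argument comes down to checking that the length-$k$ distance vectors $\mathbf{d}(x) = (d_G(c_1, x), \ldots, d_G(c_k, x))$ are pairwise distinct as $x$ ranges over $V(G)$. For any $v_j \in V(D)$ and any $i$, the construction gives $d_G(c_i, v_j) = 1$ whenever $b_i(j) = 1$, whereas if $b_i(j) = 0$ then every directed walk from $c_i$ to $v_j$ uses at least two arcs, so $d_G(c_i, v_j) \geq 2$. Thus the coordinates of $\mathbf{d}(v_j)$ that equal $1$ exactly recover the bitstring $b(j)$, so $\mathbf{d}(v_j) \neq \mathbf{d}(v_{j'})$ whenever $j \neq j'$. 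Furthermore, since no arc enters any $c_i$, we have $\mathbf{d}(c_i)$ equal to $0$ in coordinate $i$ and $\infty$ elsewhere, which distinguishes the cops from one another and from every $v_j$ (whose distance vector has each entry at least $1$).

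Finally, $D$ sits inside $G$ as an induced subdigraph as required, since the construction only adds new vertices and new arcs directed out of them. The only subtle point is recognising why distances from the cops behave cleanly: because no arc enters any $c_i$, adjoining the cops neither alters distances between vertices of $D$ nor produces a spurious distance-$1$ shortcut from $c_i$ to some $v_j$ with $b_i(j) = 0$. This monotonicity is precisely what makes the binary-encoding trick carry over to the directed setting, and I do not anticipate any real obstacle beyond making this observation explicit.
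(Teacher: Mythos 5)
Your proposal is correct and follows essentially the same construction as the paper: adjoin $\lceil \log_2 m\rceil$ new source vertices whose out-arcs encode a binary labelling of $V(D)$, so that the pattern of distance-$1$ probes recovers each vertex's bitstring (the paper attaches arcs on bit $0$ where you use bit $1$, an immaterial difference). Your explicit verification that the new vertices are mutually distinguishable and that no spurious shortcuts arise is a welcome bit of extra care, but the argument is the same.
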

\begin{proof}
Label the vertices in $D$ as the binary string equivalent of $0$ to $m-1$.
Let $D'$ be the graph formed from $D$ by adding $\lceil \log_2(m)\rceil$ vertices $V'=\{v_i\}$ such that vertex $v_i$ has an arc to those vertices in $D$ whose binary string label has a $0$ in the $i$th coordinate.

To capture the robber, place a cop on each vertex in $V'$. If the robber is on a vertex in $V'$, then a cop probes a distance of $0$, and they are captured.
So suppose all the cops probe a distance of $1$ or more.
If the robber is on a vertex with binary string $b_0b_1\ldots b_{\lceil \log_2(m)\rceil}$, then the cop on $v_i$ will probe a distance of $1$ exactly when $b_i=0$. Otherwise, the cops can deduce that $b_i=1$, and so the cops know the entire binary string of the robber, and the robber is captured.
\end{proof}

\subsection{Upper bounds using hypergraphs}

Theorem~\ref{thm:ub-sc} is especially useful in bounding the localization number of digraphs with many strong components. In the case that the digraph is dense (or even a tournament), it may have few strong components and so Theorem \ref{thm:ub-sc} provides no information.
For those dense digraphs, there is another method that can help us to understand an upper bound on their metric dimension, and hence their localization number as well.

We give the following approach using linear programming on hypergraphs, that may be of interest in its own right. A \emph{hypergraph} is a discrete structure with vertices and \emph{hyperedges}, which consists
of sets of vertices. Graphs are special cases of hypergraphs, where each hyperedge has cardinality two. For a hypergraph $\mathcal{H}$, a \emph{vertex cover} is a subset of vertices $V' \subseteq V(\mathcal{H})$ such that each hyperedge of $\mathcal{H}$ contains at least one vertex in $V'$. The minimum size of a vertex cover in $\mathcal{H}$ is denoted $\tau(\mathcal{H})$.

There is an equivalent way to define $\tau$.  Let $V(\mathcal{H}) = \{v_1, \ldots, v_n\}$.
For a subset of vertices $V'$, we define $x_i = 1$ if $v_i\in V'$ and $x_i = 0$, otherwise. In this case, $\sum_{v_i \in h} x_i\geq 1$ for all $h\in E(\mathcal{H})$ if and only if $V'$ is a vertex cover.
As such, the following integer program yields $\tau(\mathcal{H})$:
\[
\text{Minimize } \sum{x_i} \text{ subject to } \sum_{v_i \in h} x_i\geq 1 \text{ for } h \in \mathcal{H} \text{ and } x_i \in \{0,1\}.
\]

The \emph{fractional vertex-cover number} of $H$, written $\tau^*(\mathcal{H})$, is the resulting value of the linear program that drops the requirement that $x_i$ is an integer in $\{0,1\}$, and ensures that $0 \leq x_i \leq 1$ instead. The following bound is useful.

\begin{theorem}[\cite{lovasz1}]\label{thm:lovasz1}
Let $\mathcal{H}$ be a hypergraph where each vertex is contained in at most $d$ hyperedges. We then have that
\[
\tau(\mathcal{H}) \leq (1+\log d) \tau^*(\mathcal{H}).
\]
\end{theorem}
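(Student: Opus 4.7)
The plan is to prove this bound via the classical greedy vertex-cover algorithm, combined with LP duality on the fractional relaxation. At each step the greedy picks a vertex lying in the most currently-uncovered hyperedges, and the fractional value $\tau^*$ controls how fast the uncovered edge set shrinks.

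First I would establish the key averaging lemma: for any subset of hyperedges $F \subseteq E(\mathcal{H})$, there is a vertex $v$ contained in at least $|F|/\tau^*$ members of $F$. This follows by taking an optimal fractional vertex cover $x^*$ of $\mathcal{H}$ with $\sum_v x_v^* = \tau^*$, writing $d_F(v) = |\{e \in F : v \in e\}|$, and observing
\[
|F| \leq \sum_{e \in F}\sum_{v \in e} x_v^* = \sum_v x_v^* \cdot d_F(v) \leq \tau^* \cdot \max_v d_F(v).
\]

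Next I would run the greedy. Let $E_0 = E(\mathcal{H})$, and at step $i$ pick $v_i$ maximizing $d_{E_{i-1}}(v_i)$, setting $E_i = E_{i-1}\setminus\{e : v_i \in e\}$. The lemma applied to $F = E_{i-1}$ yields $|E_i| \leq (1 - 1/\tau^*)|E_{i-1}|$, and inductively $|E_t| \leq |E_0| e^{-t/\tau^*}$. To bound $|E_0|$, invoke LP duality: $\tau^*$ equals the fractional matching number, and assigning $y_e = 1/d$ to every hyperedge is a feasible fractional matching (since $\sum_{e \ni v} y_e = \deg(v)/d \leq 1$) with total weight $|E(\mathcal{H})|/d$. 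Hence $|E(\mathcal{H})| \leq d\tau^*$.

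Finally, combine the two phases. After $t = \lceil \tau^* \log d\rceil$ greedy steps, $|E_t| \leq d\tau^* \cdot e^{-\log d} \leq \tau^*$, so placing one additional vertex on each remaining edge completes a vertex cover of size at most $(1+\log d)\tau^*$. The main subtlety, and what I expect to be the main obstacle to executing cleanly, is precisely this two-phase accounting: a naive one-phase greedy analysis yields a bound involving $\log |E|$ rather than $\log d$, and obtaining the sharper constant requires switching from the greedy phase to a trivial ``one vertex per remaining edge'' phase exactly when $|E_t|$ first drops below $\tau^*$.
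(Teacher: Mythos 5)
The paper itself offers no proof of this statement --- it is quoted directly from Lov\'asz's 1975 paper --- and your greedy-plus-LP argument is essentially Lov\'asz's own route, so the strategy is the right one. Your averaging lemma is correct, and so is the bound $|E(\mathcal{H})|\le d\,\tau^*(\mathcal{H})$ (which, incidentally, needs no duality: $|E(\mathcal{H})|\le\sum_{e}\sum_{v\in e}x_v^*=\sum_v x_v^*\deg(v)\le d\,\tau^*(\mathcal{H})$ directly, since every edge has $\sum_{v\in e}x_v^*\ge 1$).

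The one genuine flaw is in the final accounting. You run $t=\lceil\tau^*\log d\rceil$ greedy steps and then add at most $\tau^*$ further vertices, and conclude a cover of size at most $(1+\log d)\tau^*$; but $\lceil\tau^*\log d\rceil$ can exceed $\tau^*\log d$ by almost $1$, so what you have actually shown is $\tau\le(1+\log d)\tau^*+1$. Writing $\delta=\lceil\tau^*\log d\rceil-\tau^*\log d$, the sharper estimate $|E_t|\le\tau^*e^{-\delta/\tau^*}$ does not rescue the constant, because $\delta+\tau^*e^{-\delta/\tau^*}\ge\tau^*$ for every $\delta\ge0$ (as $e^{-u}\ge1-u$). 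For the use made of the theorem in Theorem~\ref{th} this additive $+1$ is immaterial, but to get the inequality exactly as stated you should replace the two-phase accounting by the standard harmonic charging, which is Lov\'asz's argument: when the greedy step covers $k_i$ new hyperedges, charge each of them $c(e)=1/k_i$, so the greedy cover has size $\sum_e c(e)$. For a fixed vertex $v$, listing the hyperedges through $v$ in the order they are covered as $e_1,\ldots,e_{\deg(v)}$, just before $e_j$ is covered the vertex $v$ still meets at least $\deg(v)-j+1$ uncovered hyperedges, so $c(e_j)\le 1/(\deg(v)-j+1)$ and $\sum_{e\ni v}c(e)\le H(\deg(v))\le H(d)$, where $H(k)=1+1/2+\cdots+1/k$. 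Hence
\[
\tau(\mathcal{H})\le\sum_e c(e)\le\sum_e c(e)\sum_{v\in e}x_v^*=\sum_v x_v^*\sum_{e\ni v}c(e)\le H(d)\,\tau^*(\mathcal{H})\le(1+\log d)\,\tau^*(\mathcal{H}),
\]
with no rounding loss.
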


We now provide a new bound on the metric dimension.

\begin{theorem}\label{th}
Let $G$ be a digraph such that for any pair of vertices $x,y$ in $G$, at least $cn$ vertices of $G$ have a different distance to $x$ and to $y$. We then have that
\[\beta(G) \leq \frac{1+2\log n}{c}.\]
\end{theorem}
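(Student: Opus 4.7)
The plan is to encode the resolving-set problem as a vertex-cover problem on a carefully chosen hypergraph, and then to apply Lovász's bound from Theorem~\ref{thm:lovasz1}. Specifically, I would build a hypergraph $\mathcal{H}$ on vertex set $V(\mathcal{H}) = V(G)$, with one hyperedge
\[
h_{x,y} \;=\; \{\, v \in V(G) : d(v,x) \ne d(v,y)\,\}
\]
for each unordered pair of distinct vertices $x,y \in V(G)$. The key observation is that a set $S \subseteq V(G)$ is a resolving set of $G$ if and only if $S$ is a vertex cover of $\mathcal{H}$, since $S$ resolves the pair $\{x,y\}$ exactly when $S \cap h_{x,y} \ne \emptyset$. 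Hence $\beta(G) = \tau(\mathcal{H})$, and it suffices to bound $\tau(\mathcal{H})$.

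Next I would bound $\tau^*(\mathcal{H})$ using the hypothesis. Since every hyperedge satisfies $|h_{x,y}| \ge cn$, assigning the uniform weight $x_i = 1/(cn)$ to each vertex yields a feasible fractional cover: the constraint at hyperedge $h_{x,y}$ reads
\[
\sum_{v_i \in h_{x,y}} x_i \;\ge\; cn \cdot \frac{1}{cn} \;=\; 1.
\]
The value of this fractional cover is $n \cdot \frac{1}{cn} = \frac{1}{c}$, so $\tau^*(\mathcal{H}) \le 1/c$.

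To invoke Theorem~\ref{thm:lovasz1}, I bound the maximum degree in $\mathcal{H}$: each vertex $v \in V(G)$ lies in at most one hyperedge per unordered pair, so $v$ is in at most $\binom{n}{2} < n^2$ hyperedges, giving $d < n^2$ and thus $1 + \log d < 1 + 2\log n$. Combining,
\[
\beta(G) \;=\; \tau(\mathcal{H}) \;\le\; (1 + \log d)\,\tau^*(\mathcal{H}) \;\le\; \frac{1+2\log n}{c},
\]
which is exactly the claimed inequality.

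There is no real obstacle here beyond setting up the correspondence correctly; the whole argument is a clean application of the fractional-to-integral cover bound once the right hypergraph is introduced. The only mild subtlety is to present the equivalence $\beta(G) = \tau(\mathcal{H})$ carefully in the directed setting, where $d(v,x)$ and $d(v,y)$ can be infinite, but this causes no issue since infinite versus finite (or two distinct infinities counted as equal) is still a well-defined distinction used to form $h_{x,y}$.
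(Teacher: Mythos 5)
Your proposal is correct and follows essentially the same route as the paper: the same hypergraph of ``distinguishing'' sets, the same uniform fractional cover of weight $1/(cn)$ giving $\tau^*(\mathcal{H}) \le 1/c$, and the same application of Theorem~\ref{thm:lovasz1} with $d \le \binom{n}{2}$. Your remark on handling infinite distances is a sensible clarification but does not change the argument.
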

\begin{proof}
Consider the hypergraph $\mathcal{H}$ defined from $G$ as the hypergraph with vertices $v_1, v_2\ldots, v_n$ and hyperedges $h_{ij} = \{x\in V(G) : d(v_i,x) \neq d(v_j,x)\}$ for each $v_i,v_j \in V(G)$ with $i<j$. Observe that there are $\binom{n}{2}$ hyperedges in this hypergraph.

We will find the size of a vertex cover of this hypergraph, $\tau(\mathcal{H})$.
Set $x_{i} = 1/cn$ for each $v_i \in V(G)$.
For each $h\in \mathcal{H}$, we have that $$\sum_{v_{i} \in h} x_i = \sum_{v_{i} \in h} \frac{1}{cn} \geq cn \frac{1}{cn}=1,$$ and so the conditions of the linear program are satisfied,  yielding $\tau^*(\mathcal{H}) \leq \sum{x_i} = \frac{1}{c}$.
Each vertex can be in at most $d= \binom{n}{2}$ hyperedges (which is the total number of hyperedges).
Theorem~\ref{thm:lovasz1} then yields that $\tau(\mathcal{H}) \leq (1+\log (n^2)) \frac{1}{c}$.
There then exists a vertex cover on $\mathcal{H}$, say $S$, with at most $(1+\log(n^2)) \frac{1}{c}$ vertices.
Each hyperedge $h_{ij}$ contains at least one vertex in $x\in S$, which by definition yields that $d(v_i,x) \neq d(v_j,x)$ and so $x$ can distinguish $v_j$ and $v_j$ in $G$.
As this holds for all pairs of vertices in $G$, the set $S$ can distinguish all pairs, and hence, the metric dimension is exactly $\tau$, and the theorem follows.
\end{proof}

Note that there is no requirement for $c$ to be constant, so we may take $G$ with parameter $c = \frac{4\log n}{n}$ to yield $
\beta(G) \leq n/2$. Thus, if a digraph $G$ has $\zeta(G)>n/2$, then for most vertices $x$, there must be some $y$ such that very few vertices can distinguish $x$ and $y$.

\subsection{A lower bound}
We finish this section by providing a lower bound on the localization number, extending results of \cite{BK} on graphs. We denote the closed out-neighbor and in-neighbor sets of a vertex $v$ by $N^+[v]$ and $N^-[v]$, respectively. Let $G$ be a digraph. Define
$$M(G) = \max_{u,v\in V(G)} \left( \max_{w \in N^+[v]} d(u,w) - \min_{w \in N^+[v]} d(u,w) \right) + 1.$$

The parameter $M(G)$ measures the spread of values of the distances in a closed out-neighbor set.
Note that in an undirected graph, if we replace $N^+[v]$ by $N[v]$ in the definition, then $M(G) = 3.$ To see this, if $d(u,v) = d,$ then any vertex $w \in N[v]$ will have distance one of $d,d-1,d+1$ to $u.$

Define the \emph{out-degeneracy} of $G$ to be the maximum, over all subgraphs $H$ of $G$, of $\delta^+(H).$

\begin{theorem}\label{dt}
If $G$ has out-degeneracy $k,$ then $\zeta(G) \ge \log_{M(G)}(k+1).$
\end{theorem}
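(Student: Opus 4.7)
The plan is to produce a robber survival strategy whenever $c<\log_{M(G)}(k+1)$, that is, whenever $M(G)^c<k+1$. Showing that no such cop team can ever isolate the robber will force $\zeta(G)\geq \log_{M(G)}(k+1)$. The key structural resource is a subgraph $H\subseteq G$ witnessing the out-degeneracy, so that $\delta^+(H)\geq k$ and consequently $|N^+_H[v]|\geq k+1$ for every $v\in V(H)$. The robber will confine their play to $H$ and repeatedly use the parameter $M(G)$ to argue that within any closed out-neighborhood in $H$, the cop probes cannot split all vertices apart.

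The maintained invariant is that at the end of each round the robber is at some vertex $r_t\in V(H)$ and the cops' candidate set has at least two elements. For the inductive step, suppose the robber sits at $r_{t-1}\in V(H)$ and is about to move. Because the cops' next probe positions $u_1,\ldots,u_c$ depend only on past probe values, the robber can anticipate them before moving. By the definition of $M(G)$, the distances $d(u_i,w)$ for $w\in N^+[r_{t-1}]$ take at most $M(G)$ values for each fixed $i$, so the distance vectors assigned to vertices of $N^+_H[r_{t-1}]$ lie in at most $M(G)^c$ equivalence classes. Since $|N^+_H[r_{t-1}]|\geq k+1>M(G)^c$, pigeonhole yields two vertices $r_t,w\in N^+_H[r_{t-1}]$ with identical distance vectors under the upcoming probe. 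The robber moves to $r_t$, and $w$ automatically remains in the candidate set because $w\in N^+[r_{t-1}]$ and $r_{t-1}$ was itself a candidate. Hence the new candidate set has size at least two and $r_t\in V(H)$, preserving the invariant. For the initial placement, the robber predicts the first probe (which is forced, since there is no history), picks any $v\in V(H)$ and applies the same pigeonhole argument to $N^+_H[v]$ to obtain two same-class vertices; starting at one of them and using the other as the second candidate gives the base case.

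Iterating indefinitely shows that $c$ cops never shrink the candidate set to a single vertex, so $\zeta(G)>c$ for every integer $c$ with $M(G)^c<k+1$, from which $\zeta(G)\geq \log_{M(G)}(k+1)$ follows. The main subtle point to handle carefully is the timing of information: it is crucial that when the robber chooses its move the next probe positions are already determined (they depend only on what the cops have already seen), and that the "second candidate'' produced by pigeonhole is guaranteed to survive in the cops' candidate set because it is an out-neighbor of the previous actual robber position. A minor but worth-mentioning point is confirming that the same pigeonhole works at the very first round, where no prior robber location is available; taking $v$ to be an arbitrary vertex of $H$ and placing the robber inside $N^+_H[v]$ rather than at $v$ itself handles this cleanly.
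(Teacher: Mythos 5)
Your proposal is correct and follows essentially the same route as the paper: restrict the robber to a subgraph $H$ witnessing the out-degeneracy, anticipate the cops' next (deterministically determined) probe positions, observe that the distance vectors on $N^+_H[v]$ fall into at most $M(G)^c$ classes, and apply pigeonhole to $|N^+_H[v]|\ge k+1>M(G)^c$ to keep two indistinguishable candidates alive forever, handling the first round by pretending to occupy an arbitrary vertex of $H$. Your explicit tracking of the surviving second candidate and of the information timing is a slightly more careful write-up of exactly the argument the paper gives.
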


\begin{proof}
Assume that there are $m$ cops at play, with $m < \log_{M(G)}(k+1).$ We show that the robber has a winning strategy.

Let $H$ be a subgraph that witnesses the out-degeneracy of $G,$ so $\delta^+(H)=k$ is minimum. The robber $R$ will remain in $H$ throughout the game. Let the cops be labeled $C_i,$ where $1\le i \le m.$ Suppose that $R$ is on $v$ and the cop $C_i$ is on $u_i$ in $G$, where $1\le i \le m.$
In the next round, the robber may remain on $v$ or move to an out-neighbor $w$ of $v.$ The distances $d_G(u_i,w)$ are one of $M(G)$-many values. Hence, there are at most $M(G)^m$ possible distance vectors in that round.

By hypothesis, we have that $M(G)^m < k+1.$ Note that $k+1$ is the cardinality of ${N_H}^+[v].$ By the pigeonhole principle, there are at least two vertices in ${N_H}^+[v]$ with the same distances vectors.

Suppose that on some robber's move, the robber occupies $v \in V(H).$ If the robber is choosing an initial position, then the robber instead pretends that they already occupy some arbitrary vertex
$v$ of $H$ and wishes to move to some neighbor of $v.$ Before making
their move, the robber considers the cops' subsequent probe.  The robber finds two vertices of $H$, say $w_1$ and $w_2$ that share the same distance vector with respect to the given probe. The robber moves to $w_1$, say, and is not identified by the cops. The robber may repeat this strategy
indefinitely to avoid capture. The proof follows. \end{proof}

\section{Width parameters}
Bounds for the localization number of a graph based on width-type parameters, such as pathwidth and treewidth, were given in \cite{BBHMP,Bosek2018}. In this section, we consider directed analogues of pathwidth and treewidth, and present bounds on the localization number of a digraph using these parameters. We begin by defining the directed pathwidth of a digraph.

A directed path-decomposition of a digraph $G$ is a sequence of vertex subsets $W_1,$ $W_2,$ $\ldots,$ $W_k$ such that
\begin{enumerate}[label=(\roman*)]
    \item $\bigcup_{i=1}^k W_i=V(G)$;
    \item if $i<j<k$ then $W_i \cap W_k \subseteq W_j$; and
    \item if $(v,u)\in E(G)$, then either $u\in W_i$ and $v\in W_i$ for some $W_i$, or $u\in W_j$ and $v\in W_i$ where $i<j$.
\end{enumerate}
The \emph{width} of a directed path-decomposition is the maximum cardinality of a $W_i$ minus one. The \emph{directed pathwidth} (or simply, \emph{pathwidth}) of a digraph $G$, denoted $d\mbox{-}\mathrm{pw}(G),$ is the minimum width over all directed path-decompositions of $G$.

We have the following upper bound for the localization number using pathwidth.

\begin{theorem}\label{pwt}
$\zeta (G) \le d\mbox{-}\mathrm{pw}(G)+1.$
\end{theorem}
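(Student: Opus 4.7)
The plan is to have the cops sweep through the bags of an optimal directed path-decomposition in order, placing $|W_i|$ cops on every vertex of $W_i$ in round $i$. Since $|W_i|\le d\text{-}\mathrm{pw}(G)+1$ for each bag, this uses at most $d\text{-}\mathrm{pw}(G)+1$ cops total. If the robber ever occupies a vertex in the current bag, the cop sitting on that vertex probes distance $0$ and identifies the robber uniquely.

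For each vertex $u\in V(G)$, let $\mathrm{first}(u)$ be the smallest index $j$ with $u\in W_j$. The key invariant I would maintain by induction on $i$ is the following: \emph{after the cops' placement in round $i$, if the robber has not yet been captured, then the robber occupies a vertex $u$ with $\mathrm{first}(u) > i$.} The base case $i=0$ is vacuous.

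For the inductive step, suppose the invariant holds after round $i$, so the robber is on some $u$ with $\mathrm{first}(u)>i$. In round $i+1$ the robber moves to some $u'\in N^+[u]$ (possibly $u'=u$). I would use property (iii) to argue that $u'$ must lie in some bag of index at least $i+1$: if $u'=u$ this is immediate; if $(u,u')\in E(G)$ and $u,u'$ share a bag $W_a$, then $a\ge\mathrm{first}(u)>i$; if $(u,u')\in E(G)$ and $u\in W_a$, $u'\in W_b$ with $a<b$, then $b>a\ge\mathrm{first}(u)>i$. In every case $u'$ appears in some $W_b$ with $b\ge i+1$. Now the cops place on $W_{i+1}$. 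If $u'\in W_{i+1}$, the robber is captured. Otherwise, combining $u'\in W_b$ for some $b\ge i+2$ with the contiguity property (ii), $u'$ cannot lie in any $W_j$ with $j\le i$, for such $j$ together with $b\ge i+2$ would force $u'\in W_{i+1}$. Hence $\mathrm{first}(u')>i+1$, closing the induction.

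When $i$ reaches $k$, the number of bags, no vertex of $G$ satisfies $\mathrm{first}(u)>k$, so the robber must have been captured by the end of round $k$. There is no real obstacle here beyond bookkeeping; the only subtlety is the simultaneous use of axioms (ii) and (iii) in the inductive step, so I would take care to spell out those cases cleanly.
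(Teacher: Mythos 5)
Your proposal is correct and follows essentially the same strategy as the paper's proof: sweep the bags of an optimal directed path-decomposition in order with at most $d\mbox{-}\mathrm{pw}(G)+1$ cops, using axiom (iii) to show the robber can only move forward relative to the sweep and axiom (ii) to rule out reappearance in earlier bags. Your formulation via the invariant on $\mathrm{first}(u)$ is a clean repackaging of the paper's "cleared bags cannot be re-entered" argument, but the underlying idea is identical.
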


\begin{proof}
Let $G$ be a digraph with $d\mbox{-}\mathrm{pw}(G)=m$, and $W_1, \ldots, W_m$ be a path decomposition of $G$. We assume that for each $W_i$ for $1\le i <k$, $W_i \backslash W_{i+1}$ is non empty. Place all $m$ cops in the first bag $W_1$. If $R$ is in $W_1$, then they will be captured. Hence, we can ensure that the robber is not on $W_1$ after the first round. We say that $W_1$ has been \emph{cleared}.

Now suppose that the cops have probed the vertices of $W_i$ for some $i\ge 1.$ We claim the robber can never enter the bag in later rounds. Assume to the contrary that the robber enters some vertex $u\in W_i$ after $W_i$ has been cleared. Let $(v,u)$ be the arc through which the robber enters $W_i.$ By the definition of directed path width, there is either a bag $W_j$ containing both $u$ and $v$, or bags $W_j$ and $W_k$ where $v\in W_j,$ $u\in W_k,$ and $j<k.$

In the first case, if $j\ge i,$ then $u$ is in every bag between $W_i$ and $W_j$, so $u$ is guarded by a cop at least until the cops probe the vertices of $W_j.$ That is, $u$ is guarded until $v$ is cleared, so the robber cannot move to $u$ from $v.$ If $j < i,$ then $v\in W_i$ also, so $v$ has already been cleared and hence, the robber cannot be on $v.$ In the latter case, $u$ is guarded by a cop in bag $W_k$ while vertex $v$ was already probed in $W_j.$ The robber will not be able to enter $W_i$ through vertex $v$ without first being located by a cop. Hence, we can guarantee that the robber is not in $W_i$ and that the robber cannot re-enter $W_i.$

We proceed by induction to probe the vertices in each subsequent bag, ensuring that the robber is not in a previous bag. We continue scanning the vertices of each bag until we each the endpoint, $W_m$ and locate the robber in this bag.
\end{proof}

Note that the Theorem~\ref{pwt} bound is tight for digraphs $D$ with $d\mbox{-}\mathrm{pw}(G)=0$, such as transitive tournaments and orientations of $P_n.$

We also consider bounds on the localization number of digraphs using a directed analogue of treewidth. This parameter, called DAG-width, is defined as follows. Let $G$ be a directed acyclic graph (or \emph{DAG}). The partial order $\preceq _G$ on $G$ is the reflexive, transitive closure of $E(G).$ That is, $i \preceq_G j$ if there is a directed path from vertex $i$ to vertex $j$ in $G$.

Let $G$ be a digraph and $W,V' \subseteq V(G).$ We say that $W$ \textit{guards} $V'$ if for all $(u,v)\in E(G),$ if $u \in V'$ then $v\in V' \cup W.$ A \textit{DAG-decomposition} of a digraph $G$ is a pair $(D, X)$ where $D$ is a DAG and $X = (X _d)_{d \in V(D)}$ is a family of subsets of $V(G)$ such that:
\begin{enumerate}[label=(\roman*)]
    \item $\bigcup _{d\in V(D)} X _d =V(G);$
    \item for all $d \preceq_D d' \preceq_D d'',$ $X_d \cap X _{d''} \subseteq X_{d'};$ and
    \item for all $(d, d') \in E(D),$ $X_d \cap X_{d'}$ guards $X _{\succeq d'} \backslash X_d$ where $X _{\succeq d'}=\cup_{d' \preceq_D d''} X _{d''}.$ For any source $d,$ $X_{\preceq d}$ is guarded by $\emptyset.$
\end{enumerate}
In \cite{Biedl2015} it was shown that $(\mathrm{iii})$ is equivalent to the following property, which is more intuitive.
\begin{enumerate}[label=(\roman*)']
\setcounter{enumi}{2}
    \item For any source $j$ in $D,$ any $u \in X_j$, and any edge $(u,v)$ in $G$, there exists a successor bag $X_k$ of $X_j$ with $v \in X_k.$ Also, for every arc $(i,j) \in E(D),$ any $u \in (X_j \backslash X_i),$ and any edge $(u,v) \in E(G),$ there exists a successor-bag $X_k$ of $X_j$ with $v \in X_k,$ where a \textit{successor-bag} of $X_j$ is a bag $X_k$ with $j \preceq _D k.$
\end{enumerate}

The \textit{width} of a DAG-decomposition is $\max \{|X_d|: d\in V(D)\}.$ The \emph{DAG-width} of a digraph $G$ is the minimum width over all DAG-decompositions of $G$. We finish the section with the following bound via DAG-width.

\begin{theorem}\label{dagw}
For a digraph $G$, we have that
$\zeta (G) \le \DAG (G).$
\end{theorem}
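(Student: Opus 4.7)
My plan is to adapt the cop strategy of Theorem~\ref{pwt} to the DAG-decomposition setting. Fix a DAG-decomposition $(D,X)$ of $G$ with width $k=\DAG(G)$ and a topological ordering $d_1,\dots,d_m$ of $V(D)$ so that $d_a\preceq_D d_b$ implies $a\le b$. The cops' strategy is the natural analogue of the pathwidth one: in round $i$, place all $k$ cops on the vertices of $X_{d_i}$ and probe. If the robber resides at any vertex of $X_{d_i}$, the cop there reports distance $0$, uniquely identifying the robber's position.

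The core of the argument is an inductive invariant: if the robber has survived round $i$, its current position $v_i$ belongs to $X_{d_j}$ for some $j>i$. The base case is immediate from property~(i). For the inductive step, surviving round $i$ together with the round-$i$ invariant forces $v_i\notin X_{d_i}$ and $v_i\in X_{d_j}$ for some $j>i$. If the robber remains at $v_i$ for the next round, the invariant trivially persists. Otherwise, the robber moves along an arc $(v_i,v_{i+1})\in E(G)$, and I would invoke the equivalent guarding formulation~(iii)' of~\cite{Biedl2015} at a carefully chosen bag $d^\ast$. Specifically, set $S=\{d\in D_{v_i}:\text{topological position of }d\ge i+1\}$, which is nonempty by the invariant, and let $d^\ast$ be a minimal element of $S$ under $\preceq_D$. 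Once one verifies the hypothesis of~(iii)' at $d^\ast$, that result yields $v_{i+1}\in X_k$ for some $k\succeq_D d^\ast$, whose topological position is at least that of $d^\ast$, hence at least $i+1$, and so the invariant persists into round $i+1$.

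The main obstacle is verifying that~(iii)' actually applies at $d^\ast$, i.e., that either $d^\ast$ is a source of $D$ or has some predecessor in $D$ outside $D_{v_i}$. Minimality of $d^\ast$ in $S$ rules out predecessors of $d^\ast$ lying in $D_{v_i}$ with topological position at least $i+1$; the residual case is a predecessor $d$ of $d^\ast$ with $d\in D_{v_i}$ and topological position at most $i-1$ (position $i$ is ruled out by $v_i\notin X_{d_i}$). Ruling out this configuration is the delicate combinatorial heart of the proof: one uses the convexity property~(ii) applied to $D_{v_i}$, together with the branching structure of $D$ and the non-capture condition at round $i$, to derive a contradiction, possibly by considering the interaction of all predecessors of $d^\ast$ simultaneously rather than any single one. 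Once the invariant is established for every round, it cannot hold at round $m$ (there is no $j>m$), and backtracking shows the robber must have been captured at some round $i\le m$, completing the proof.
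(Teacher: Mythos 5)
Your cop strategy (place all $\DAG(G)$ cops on the bags of a minimum-width DAG-decomposition and sweep them in topological order) is exactly the strategy the paper uses, and your invariant --- after round $i$ the surviving robber occupies a vertex lying in some bag of topological position greater than $i$ --- is the right shape of statement. The problem is that you have not proved the inductive step: you explicitly defer ``the delicate combinatorial heart of the proof,'' namely the verification that property (iii)' applies at your chosen bag $d^\ast$, and the resolution you sketch does not work. The residual configuration you isolate --- an arc $(d,d^\ast)\in E(D)$ with $v_i\in X_d\cap X_{d^\ast}$, $d$ of topological position at most $i-1$ and $d^\ast$ of position at least $i+1$ --- cannot be excluded by the convexity axiom (ii), because (ii) only constrains bags $X_{d'}$ with $d\preceq_D d'\preceq_D d^\ast$, and the currently scanned node $d_i$ need not lie on any directed $D$-path from $d$ to $d^\ast$ once $D$ branches. (A three-node DAG with arcs $(a,b)$ and $(a,c)$, topological order $a,b,c$, and a vertex of $G$ placed in $X_a\cap X_c$ but not $X_b$ already satisfies (ii).) Nor does the game history exclude it: the robber need not have occupied $v_i$ at round $\mathrm{pos}(d)$ when $X_d$ was scanned, so it may legitimately arrive at such a $v_i$ later. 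In that configuration, the bags at which (iii)' is guaranteed to apply are the minimal elements of the set of bags containing $v_i$, and every such minimal element lying below $d^\ast$ has topological position at most $i-1$; the conclusion of (iii)' there only places $v_{i+1}$ in some bag $\succeq_D$ a node of small position, which is not enough to propagate your invariant.

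To close the argument you need the separation content of the guarding axiom (iii) itself --- that $X_d\cap X_{d^\ast}$ guards $X_{\succeq d^\ast}\setminus X_d$, so the robber can leave the cone below $d^\ast$ only through $X_{d^\ast}$ --- rather than only the successor-bag reformulation (iii)'. For comparison, the paper's proof runs the same sweep but disposes of the re-entry issue in a sentence, asserting from (ii) and (iii) that every arc of $G$ either lives inside a bag or points from a bag to a successor bag, and does not confront the configuration above explicitly. Your write-up is, if anything, more honest about where the difficulty sits, but as it stands the central step is missing, and the hint you offer for supplying it (convexity plus ``the interaction of all predecessors of $d^\ast$'') is not a proof.
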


Observe that Theorem~\ref{dagw} is tight by Theorem~\ref{thm:DAG1}, as acyclic digraphs have localization number 1.

\begin{proof}[Proof of Theorem~\ref{dagw}]
Let $G$ be a digraph and $(D, X)$ be a DAG-decomposition of $G$ of minimal width. Since $D$ is a DAG, we can consider a topological sort of the vertices of $D$ given by $0, 1, \ldots m$, where $V(D)=\{0, 1, \ldots m \}.$ Let $D'$ be the DAG in which we replace each vertex $i$ of $D$ with the corresponding bag $X_i.$ Note that $X_0$ is a source in $D'$ and $X_m$ is a sink in $D'$.

Suppose we have $w$ cops where $w=\mathrm{DAG \mbox{-} width}(G).$ We begin the game by having the cops scan the vertices in $X_0,$ and successively scan the vertices of each bag in the topological sort from $X_0$ to $X_m.$ Since we have $w$ cops where $w=\max_{0\le i \le m} \{|X_i|\},$ the cops can scan every vertex in a bag in a single round.

From conditions (ii) and (iii) of the definition of DAG-decomposition, we have that if there is an edge $(u,v)\in G$, then $u\in X_i$ and $v\in X_j$ for some $i\le j,$ and for any edge $(i,j)\in D,$ any arcs between the vertices of $X_i$ and $X_j$ are directed from $X_i$ to $X_j.$ Arcs in $D'$ are directed from a lower-indexed bag to a higher-indexed bag, and any arc $(u,v) \in G$ is either contained within a bag or there is a bag $X_j \in D'$ such that $u\in X_j$ and a successor bag $X_k\in D'$ of $X_j$ such that $v\in X_k.$ Thus, once a bag $X_i$ is cleared by the cops, the robber cannot enter any previously cleared vertex (that is, a vertex that appears before $X_i$ in the topological sort).

Therefore, by successively clearing the bags from $X_0$ to $X_m$, the cops will force the robber into some bag corresponding to a sink in $D'$ as $D'$ has no directed cycles, and will capture the robber there.
\end{proof}

\section{Random and quasi-random tournaments}

In this section, we consider the localization number of both random and quasi-random tournaments.

\subsection{Random tournaments}
We first define random tournaments, which are natural analogues of binomial random graphs $G(n,p).$ Let $T(n,p)$ be a tournament on $[n]=\{1,2,\ldots, n\}$ such that the pair $(i,j)\in T(n,p)$ for $i < j$ with probability $p$ and $(j,i)\in T(n,p)$ with probability $1-p$ independently for each distinct $i,j \in [n]$.
{\L}uczak, Ruci\'nski and Gruszka studied the the behavior of this graph with regards to small subgraphs and strong connectivity for a wide range of $p$; see \cite{luczak-ruinski-gruszka-1996}. We say that an
event in a probability space holds \emph{asymptotically almost surely} (or \emph{a.a.s.}) if the probability
that it holds tends to 1 as $n$ goes to infinity. In particular, they showed that $T(n,p)$ is strongly connected a.a.s.\ provided that $np\rightarrow \infty$. This was studied in the case of $p = 1/2$~\cite{moon-moser-1962}, which is equivalent to the choosing a uniform random tournament from the set of all tournaments. For more background on random graphs and structures, the reader is directed to \cite{bp}.

We first establish some basic facts about $T(n,p)$ for constant $p$. For this, we need some notation. For a tournament $G$, let $\chi_G: V^2 \rightarrow \{-1,1\}$ be the \emph{arc indicator} of $G$: for distinct vertices $u$ and $v$, define
$$
\chi_G(u,v) = \begin{cases}1 & \textrm{if }(u,v)\in A\\-1 & \textrm{if }(u,v)\not\in A.\end{cases}
$$
Define $S(u,v)$ as the \emph{sameness set} for two vertices $u,v\in V$ by
$$
S(u,v) = \{z\in V : \chi_G(u,z) = \chi_G(v,z)\}
$$
and $s(u,v) = |S(u,v)|$, $\overline{S}(u,v) = V\setminus S(u,v)$, and $\overline{s}(u,v) = |\overline{S}(u,v)|$. We will focus on $s(u,v)$ in this subsection, and utilize $\overline{S}(u,v)$ and $\overline{s}(u,v)$ when we discuss quasi-random graphs.

We first establish some useful asymptotic properties of $T(n,p)$. For an event $A$ on a
probability space, we let $\mathrm{Pr}(A)$ denote the probability of $A.$ Given a random variable
$X,$ we let $\mathbb{E}(X)$ be the expectation of $X.$

\begin{lemma}\label{ranl}
In $T(n,p)$, if $0 < p < 1$ is constant, then a.a.s.\ we have the following.
\begin{enumerate}[label=(\roman*)]
\item For all
$x \in V$,
\begin{align*}
(1-\varepsilon)\min(1-p, p)(n-1)\le |N^+(x)| \le  (1+\varepsilon)(1-\min(1-p, p))(n-1).
\end{align*}
\item For all $x < y \in V$,
\begin{equation*}
2(1-\varepsilon)(1-p)p (n-2)\le s(x,y) \le (1+\varepsilon)((1-p)^2 + p^2) (n-2).
\end{equation*}
where $\varepsilon = 1/\sqrt {\log n}$.
\item $T(n,p)$ has diameter $2$.
\end{enumerate}
\end{lemma}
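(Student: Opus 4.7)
The plan is to prove all three parts via explicit expectation calculations followed by Chernoff concentration and a union bound, exploiting that for constant $p$ the relevant expectations are $\Theta(n)$ while the slack $\varepsilon = 1/\sqrt{\log n}$ shrinks much more slowly than $n^{-1/2}$.

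For (i), I would fix $x \in [n]$ and write $|N^+(x)| = \sum_{y > x} \mathbf{1}[(x,y)\in A] + \sum_{y < x} \mathbf{1}[(x,y)\in A]$ as a sum of $n-1$ independent Bernoulli variables. By the definition of $T(n,p)$, each indicator in the first sum has mean $p$ and each in the second has mean $1-p$, so $\mathbb{E}|N^+(x)| = p(n-x) + (1-p)(x-1)$, which always lies in $[\min(p,1-p)(n-1),\ \max(p,1-p)(n-1)]$. A standard multiplicative Chernoff bound shows $|N^+(x)|$ is within a factor $1\pm\varepsilon$ of its mean with failure probability at most $2\exp(-\Omega(\varepsilon^{2} n)) = n^{-\omega(1)}$, and a union bound over the $n$ vertices yields~(i).

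For (ii), fix $x < y$ and write $s(x,y) = \sum_{z\neq x,y} \mathbf{1}[\chi_G(x,z)=\chi_G(y,z)]$. Since the arc between $\{x,z\}$ and the arc between $\{y,z\}$ are independent, a short case analysis on the position of $z$ gives the per-$z$ probabilities: if $z<x$ or $z>y$, both indicators $\chi_G(\cdot,z)=1$ have the same probability ($1-p$ or $p$ respectively), so the matching probability is $p^2+(1-p)^2$; if $x<z<y$, one has probability $p$ and the other $1-p$, so the matching probability is $2p(1-p)$. Hence $\mathbb{E} s(x,y)$ is a convex combination of these two values times $n-2$, landing in $[2p(1-p)(n-2),\ (p^2+(1-p)^2)(n-2)]$. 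Another Chernoff bound (on a sum of independent indicators) plus a union bound over the $\binom{n}{2}$ pairs $(x,y)$ gives~(ii).

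For (iii), it suffices to show that for every ordered pair of distinct vertices $(x,y)$ there exists $z$ with both $(x,z),(z,y) \in A$. For each candidate $z\notin\{x,y\}$, the two arc-orientations involved are independent and each has probability at least $\min(p,1-p)$, so $\mathrm{Pr}[(x,z),(z,y)\in A] \ge \min(p,1-p)^{2} =: q > 0$. Independence across the $n-2$ choices of $z$ then gives $\mathrm{Pr}[\text{no such } z\text{ exists}] \le (1-q)^{n-2} = e^{-\Omega(n)}$, and a union bound over the at most $n(n-1)$ ordered pairs finishes~(iii). The main obstacle is simply the bookkeeping in the case analysis for $s(x,y)$; everything else is textbook concentration, since all relevant means are $\Theta(n)$ and the slack $\varepsilon$ is far larger than the Chernoff standard deviation.
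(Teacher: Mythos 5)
Your proposal is correct and follows essentially the same route as the paper: explicit expectation computations for $|N^+(x)|$ and $s(x,y)$ by conditioning on the position of the third vertex, followed by multiplicative Chernoff bounds and a union bound, noting that the means are $\Theta(n)$ while $\varepsilon^2 = 1/\log n$. The only cosmetic difference is in (iii), where the paper applies Chernoff to the count $|N^{+-}(x,y)|$ of intermediate vertices to show it is $\Omega(n)$, while you bound the probability that no intermediate vertex exists directly by $(1-q)^{n-2}$ using independence across candidates; both are valid and equally short.
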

\begin{proof}
Without loss of generality we may assume that $p \le 1/2$ (that is, $p \le 1-p$) since the calculations with $p> 1/2$ are symmetric. We then have that
$$
\mathbb E [|N^+(x)|] = (1-p)(x-1) + p(n-x) \ge p(n-1).
$$
Using Chernoff's bound (see, for example, \cite{JLR}), we have that
$$
\Pr\left(||N^{+}(x)| - \mathbb E [|N^+(x)|]|\ge \varepsilon \mathbb E [|N^+(x)|]\right) \le 2 \exp\left(-\frac{\varepsilon^2 p(n-1)}{3}\right).
$$
Using the union bound gives us that a.a.s.
$$
|N^{+}(x)| = (1+o(1))((1-p)(x-1)+p(n-x)).
$$
Similarly, the expected value of $s(x,y)$ for $x<y$ is
\begin{align*}
\mathbb{E}[s(x,y)] &= ((1-p)^2 + p^2) (x-1) + 2(1-p)p(y-1-x) + (p^2 + (1-p)^2)(n-y).
\end{align*}
Note that since $2(1-p)p \le p^2 + (1-p)^2$ we have that
$$
2(1-p)p (n-2)\le \mathbb{E}[s(x,y)] \le ((1-p)^2 + p^2) (n-2).
$$
Chernoff's bound and the union bound show that for all $x < y$ we have a.a.s.\ that
$$
2(1-\varepsilon)(1-p)p (n-2)\le \mathbb{E}[s(x,y)] \le (1+\varepsilon)((1-p)^2 + p^2) (n-2).
$$

Now assume that there are vertices $x < y$ with $(y,x)\in E$ but there are no  paths of length 2 from $x$ to $y$. For ease of notation, let $G=T(n,p).$ Define $N^{+-}(x,y) = \{v: \chi_G(x,v) = 1, \chi_G(y,v)=-1\}$. The expected value of $|N^{+-}(x,y)|$ is
$$
\mathbb E[|N^{+-}(x,y)|] = p(1-p) (x-1) + p^2(y-1-x) + p(1-p)(n-y).
$$
We note that
$$
p^2 (n-2)\le \mathbb E[|N^{+-}(x,y)|] \le p(1-p) (n-2),
$$
and so Chernoff's bound with the union bound gives that
$$
\mathbb |N^{+-}(x,y)| \ge (1-\varepsilon)p^2 (n-2) = \Omega(n),
$$
which is a contradiction.
\end{proof}

With Lemma~\ref{ranl} at our disposal, we may now bound the localization number of a random tournament.

\begin{theorem}
For $0<p < 1$ constant, let $\rho = p^2 + (1-p)^2$. We then have a.a.s.\ that
$$
(1+o(1))\log_{2} n \le \zeta(T(n,p))  \le (2+o(1))\frac{\log n}{\log(1/\rho)}.
$$
In particular, for almost all tournaments $G$,
$$
(1+o(1))\log_{2} n \le \zeta(G) \le (2+o(1))\log_2 n.
$$
\end{theorem}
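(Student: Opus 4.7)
The plan is to prove the upper bound by a random-coding argument that bounds the metric dimension $\beta(T(n,p))$, and the lower bound by a pigeonhole-type refinement of Theorem~\ref{dt} exploiting the diameter-$2$ property of $T(n,p)$ from Lemma~\ref{ranl}(iii) to improve the base of the logarithm from $3$ to $2$. The main obstacle is the lower bound: a naive application of Theorem~\ref{dt} with $M(G) \le 3$ gives only $\log_3 n < \log_2 n$, so one must open up the proof of Theorem~\ref{dt} to shave the base down.

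For the upper bound, I would condition on the a.a.s.\ events of Lemma~\ref{ranl}(ii) and (iii): $G=T(n,p)$ has diameter $2$ and $s(u,v) \le (1+o(1))\rho(n-2)$ for every pair $u,v$. In a diameter-$2$ tournament, a vertex $z \notin \{u,v\}$ distinguishes $u,v$ as a cop (i.e., $d(z,u)\ne d(z,v)$) exactly when $\chi(z,u)\ne\chi(z,v)$, so the density of non-distinguishing cops for any pair is at most $(1+o(1))\rho$. Now pick $k := \lceil (2+\varepsilon)\log n/\log(1/\rho)\rceil$ cop vertices independently and uniformly at random. For each fixed pair, the chance that all $k$ fail to distinguish it is at most $((1+o(1))\rho)^k$, and a union bound over the $\binom{n}{2}$ pairs gives total failure probability $O(n^2\rho^k)\,(1+o(1))^k = O(n^{-\varepsilon}) = o(1)$. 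Hence a resolving set of size $k$ exists, so $\zeta(G) \le \beta(G) \le k$.

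For the lower bound, I would run the robber strategy from Theorem~\ref{dt} on $H=G$ itself, which has $\delta^+(G)=\Theta(n)$ a.a.s.\ by Lemma~\ref{ranl}(i); in particular, the out-degeneracy is $\Omega(n)$. Suppose the cops play with $m$ pieces on vertices $u_1,\ldots,u_m$ while the robber is at $v$. For any $w\in N^+[v]$, the coordinate $d(u_i,w)$ lies in $\{1,2\}$ whenever $w\ne u_i$ (by diameter $2$) and equals $0$ precisely when $w=u_i$. Thus the map $w\mapsto (d(u_1,w),\ldots,d(u_m,w))$ takes at most $2^m+m$ values on $N^+[v]$, and whenever $|N^+[v]|>2^m+m$ pigeonhole yields two candidates $w_1,w_2\in N^+[v]$ the cops cannot separate; the robber, as in Theorem~\ref{dt}, picks one and maintains an indistinguishable pair indefinitely. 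Since $|N^+[v]|\ge 1+\delta^+(G)=\Theta(n)$, this succeeds whenever $m\le \log_2 n-O(1)$, giving $\zeta(G)\ge (1+o(1))\log_2 n$. The ``In particular'' clause follows by taking $p=1/2$, so that $\rho=1/2$ and $\log(1/\rho)=\log 2$.
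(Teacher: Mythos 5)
Your proposal is correct and follows essentially the same route as the paper: the upper bound is the same random-resolving-set argument bounding $\beta(T(n,p))$ via Lemma~\ref{ranl}(ii) and a union bound over pairs, and the lower bound is the same diameter-$2$ pigeonhole on $N^+[v]$ (your count of $2^m+m$ distance vectors, accounting for the possible $0$ coordinate, is in fact slightly more careful than the paper's stated $2^m$). The only cosmetic differences are sampling the cop set i.i.d.\ rather than as a uniform random subset, and phrasing the lower bound as a refinement of Theorem~\ref{dt}.
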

\begin{proof}
For the lower bound, we use a technique analogous to the bound for $k$-degeneracy in~\cite{BK}.
For the upper bound, we use an approach analogous to the one in~\cite{bollobas-mitsche-pralat-2013}, where they determined the metric dimension of $G(n,p)$ for constant $p$.

We start with the lower bound. Assume that $p \le 1/2$ (the argument for the reverse inequality is similar). Fix $0 < \alpha < p$. We show that $\zeta(T(n,p)) \ge \log_{2}(\alpha n)$.
Suppose there are $m < \log_2(\alpha n)$ cops. Since $T(n,p)$ has diameter $2$, we have that the cardinality of the set of distance vectors $(d_1, d_2, \ldots, d_m)$ for the vertices of $N^+[v]$ is at most $2^m < \alpha n$. However, $|N^+[v]| \ge (1-\varepsilon)np$, and so the number of distance vectors is strictly less than the size of the closed neighborhood of $v$. Hence, by the pigeonhole principle, there exists at least two vertices of $N^+[v]$ that have the same distance vector.
Thus, the robber may always evade capture by moving to one of the vertices in $N^+[v]$ with a duplicate distance vector associated with it.

Now we turn to the upper bound, where we a derive an upper bound on the metric dimension of $T(n,p).$ Let $W$ be a randomly chosen set of
$$
k = \frac{(2+\varepsilon)}{\log(1/\rho)} \log n = O(\log n)
$$
 vertices in $T(n,p)$.
For a random choice of $W$, let $p_W$ be the probability that $W$ does not distinguish a pair $x,y \in V$. For $W$ to not distinguish $x,y$, each $w\in W$ needs to be a member of $S(x,y)$. We then have that
$$
p_W \le \left(\frac{s(x,y)}{n}\right)\left(\frac{s(x,y)-1}{n-1}\right) \cdots \left(\frac{s(x,y)-k+1}{n-k+1}\right) \le \left(\frac{s(x,y)}{n}\right)^{k},
$$
which is at most
\begin{align*}
(1+\varepsilon)^k \rho^k
&\le \exp\left(k\log \rho+k \log (1+\varepsilon)\right) \\
&= \exp\left(\frac{(2+\varepsilon)\log n}{\log(1/\rho)}\log \rho + \frac{(2+\varepsilon)\log n}{\log(1/\rho)}\log(1+\varepsilon)\right)\\
&= \exp\left(-(2+\varepsilon) \left(1 - \frac{\log(1+\varepsilon)}{\log(1/\rho)}\right)\log n\right)\\
&= n^{-2-\varepsilon(1+o(1))}\\
&\le  n^{-2}.
\end{align*}
Thus, we obtain that the expected number of pairs $x,y$ that are not distinguished is at most
$$
\frac{n^2}{2}p_W \le \frac12,
$$
so by the probabilistic method, there exists at least one $W$ that distinguishes all pairs. Hence, we have that $\beta(T(n,p)) \le k$.
\end{proof}

\subsection{Quasi-random tournaments}

Quasi-random graphs exhibit properties of large scale random binomial graphs. Such graphs are defined by a set of equivalent conditions satisfied a.a.s.\ by $G(n,p).$ We consider quasi-random tournaments as introduced by Chung and Graham in~\cite{chung-graham-1991}. We use their notation that we recap here.

Let $G = (V,E)$ be a tournament. We let $G(n)$ be a tournament on $n$ nodes.  Define $d^\pm(v,X) = |N^\pm(v) \cap X|$ and $d^\pm(X,X') = \sum_{v \in X} d^\pm(v,X')$.
An \emph{ordering} of $G = (V,E)$ is an injective mapping $\pi:V\rightarrow [n]$.
An arc $(u,v)$ is said to be $\pi$-\emph{increasing} if $\pi(u) < \pi(v)$.
Form the undirected graph $G_\pi^+$ on $V$ by creating an edge $uv$ for each $\pi$-increasing arc $(u,v)$ of $G$.
In addition, let $N^*_G(D)$ be the number of labeled copies of the subdigraph $D$ in $G$. Let $E4C$ be a sequence $(w,x,y,z)$ with
$$
\chi_T(w,x)\chi_T(x,y)\chi_T(y,z)\chi_T(z,w) = 1.
$$

The following result is the main one on quasi-random tournaments.

\begin{theorem}[\cite{chung-graham-1991}]\label{thm:quasi1}
For a $G$ a tournament with vertices $V$, the following are equivalent.
\begin{enumerate}
\item For all tournaments $G'(s)$ of order $s$,
$$
N^*_G(G'(s)) = (1+o(1))n^s 2^{-\binom s2}.
$$
\item $N^*_G(E4C) = (1+o(1))(n^4/2)$.
\item $\sum_{u,v\in V} |s(u,v)-n/2|=o(n^3)$. \label{thm:quasi:s}
\item $\sum_{u,v\in V} ||\{w \in V|\chi_T(u,w) = 1 = \chi_T(v,w)\}|- (n/4)| = o(n^3)$.
\item For all $X \subseteq V$, $G' = G[X]$ satisfies
$$
\sum_{v\in X} |d_{G'}^+(v)-d_{G'}^-(v)| = o(n^2);
$$
that is, $G'$ is \emph{almost balanced}. \label{thm:quasi:balanced}
\item Every subtournament $G'$ of $G$ on $\lfloor n/2 \rfloor$ vertices is almost balanced.
\item For every vertex partition $N = X \cup Y$ with $|X| = \lfloor n/2 \rfloor$, $|Y| = \lceil n/2 \rceil$, we have that
$$
\sum_{v \in X} |d^+(v,Y)-d^-(v,Y)| = o(n^2)
$$
\item For all $X,Y \subseteq V$,
$$
\sum_{v \in X} |d^+(v,Y)-d^-(v,Y)| = o(n^2).
$$
\end{enumerate}
\end{theorem}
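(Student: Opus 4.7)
The plan is to prove the equivalence by establishing a cycle of implications (1) $\Rightarrow$ (2) $\Rightarrow$ (3) $\Rightarrow$ (4) $\Rightarrow$ (5) $\Rightarrow$ (6) $\Rightarrow$ (7) $\Rightarrow$ (8) $\Rightarrow$ (1), following the Chung--Graham moment-counting template. The easy implications are (1) $\Rightarrow$ (2) (specialize the subgraph count to $s = 4$ with $G'$ being the cyclic orientation of $C_4$, noting that $E4C$ tuples correspond to tournaments on $4$ vertices with an even number of ``forward'' edges around the cycle), (5) $\Rightarrow$ (6) (direct restriction), and (3) $\Leftrightarrow$ (4) (the set in (4) is one of two subsets of $S(u,v)$ determined by the signs of $\chi(u,\cdot)$ and $\chi(v,\cdot)$; symmetry and the identity $s(u,v) = |\{w: \chi(u,w) = \chi(v,w) = 1\}| + |\{w: \chi(u,w) = \chi(v,w) = -1\}|$ give the translation up to $o(n^3)$ error).

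For (2) $\Rightarrow$ (3), I would expand the count $N^*_G(E4C)$ using indicator variables $\chi_G$. A direct calculation shows
\[
N^*_G(E4C) \;=\; \sum_{u,v \in V} s(u,v)^2 \;-\; O(n^3),
\]
where the lower order term accounts for degenerate tuples. Combined with the global identity $\sum_{u,v} s(u,v) = n^3/2 + O(n^2)$ (counted by interchanging summation order and reading off sign matches), hypothesis (2) forces the second moment of $s(u,v)$ to concentrate at $n/2$. Cauchy--Schwarz then converts the $L^2$ concentration to the $L^1$ bound in (3).

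For (3) $\Rightarrow$ (5) and the block (6) $\Rightarrow$ (7) $\Rightarrow$ (8), I would use that the imbalance $d^+(v) - d^-(v) = \sum_w \chi_G(v,w)$, so sums of imbalances squared expand into pair-sums of $\chi(u,\cdot)\chi(v,\cdot)$, which are precisely controlled by $s(u,v) - n/2$; the partition and restriction variants then follow by inclusion-exclusion over $X$ and $Y$ and by averaging over complements. The remaining key passage is (8) $\Rightarrow$ (1), where one recovers arbitrary subgraph densities from discrepancy. I would prove this by induction on $s$: to extend a labeled embedding of $G'(s-1)$ into $G$ to one of $G'(s)$, the number of valid choices for the $s$-th vertex is a signed count over sets $Y_\sigma$ of vertices realizing a prescribed sign pattern $\sigma$ with respect to the already-embedded vertices, and iterated application of (8) (to the partial neighborhoods) shows each $|Y_\sigma| = n/2^{s-1} + o(n)$.

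The main obstacle is (8) $\Rightarrow$ (1): maintaining additive error control across $s-1$ nested applications of the discrepancy bound while splitting into the $2^{s-1}$ sign classes requires a careful inductive hypothesis of the form $|Y_\sigma| = n/2^{|\sigma|} + o(n)$ for every partial sign pattern, with the $o(n)$ error term uniform over choices. This uniformity is where the counting lemma has real content, and it is what makes (1) genuinely stronger-looking than (8) until one pushes the induction through.
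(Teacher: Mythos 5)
First, a point of comparison: the paper does not prove this theorem at all --- it is stated with an attribution to Chung and Graham \cite{chung-graham-1991} and used as a black box --- so there is no in-paper argument to measure your proposal against. Your overall architecture (a cycle of implications pivoting on the fourth-moment count $N^*_G(E4C)$, with the counting lemma $(8)\Rightarrow(1)$ as the hard step) is the standard Chung--Graham template, and the easy implications you list are fine. But two of your steps contain concrete errors. In $(2)\Rightarrow(3)$, the ``global identity'' $\sum_{u,v}s(u,v)=n^3/2+O(n^2)$ is false in general: grouping by the third vertex gives $\sum_{u,v}s(u,v)=\sum_{w}\bigl(d^+(w)^2+d^-(w)^2\bigr)+O(n^2)=\tfrac{n(n-1)^2}{2}+\tfrac12\sum_w\bigl(d^+(w)-d^-(w)\bigr)^2+O(n^2)$, and for a transitive tournament the second term is $\Theta(n^3)$. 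Likewise $N^*_G(E4C)$ is not $\sum_{u,v}s(u,v)^2-O(n^3)$; in the quasi-random case the right side is about $n^4/4$ while the left is about $n^4/2$. The correct expansion, obtained by writing the $E4C$ indicator as $\tfrac12\bigl(1+\chi_G(w,x)\chi_G(x,y)\chi_G(y,z)\chi_G(z,w)\bigr)$ and summing the product over the two free vertices $w$ and $y$, is $N^*_G(E4C)=\tfrac{n^4}{2}+2\sum_{u,v}\bigl(s(u,v)-\tfrac n2\bigr)^2+O(n^3)$. From this, $(2)$ is equivalent to $\sum_{u,v}(s(u,v)-n/2)^2=o(n^4)$, and Cauchy--Schwarz together with the trivial bound $|s(u,v)-n/2|\le n$ gives the equivalence with the $L^1$ statement $(3)$. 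The centering at $n/2$ has to happen inside the square, not be patched in afterwards via a (false) first-moment identity.

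The more serious gap is in $(8)\Rightarrow(1)$. You assert that the inductive hypothesis $|Y_\sigma|=n/2^{|\sigma|}+o(n)$ can be made ``uniform over choices'' of the anchoring tuple, and you identify this uniformity as the content of the counting lemma. In fact the uniformity is exactly what you do not get: condition $(8)$ (equivalently $(5)$) bounds only an average of discrepancies and is compatible with $o(n)$ vertices having out-degree $n-1$, for which already the one-anchor set $Y_{+}$ fails to have size $n/2+o(n)$. The counting lemma must be run with an ``all but $o(n^{k})$ anchoring $k$-tuples'' hypothesis: one shows via a Markov/Cauchy--Schwarz step that the exceptional partial embeddings are rare, and that their total contribution to $N^*_G(G'(s))$ is $o(n^s)$ because each contributes at most $n$ extensions. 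As written, your induction does not close, since a single bad anchor at level $k$ would contaminate all $n^{s-1-k}$ completions through it unless its rarity is quantified.
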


We say that a tournament that satisfies any of the conditions of Theorem~\ref{thm:quasi} is a \textit{quasi-random} tournament. Our main result on quasi-random tournaments is the following.

\begin{theorem}\label{thm:quasi}
Let $G$ be a quasi-random tournament. If $G$ has diameter $2$ and there exists some constant $0 < \varepsilon < 1$ where $s(x,y) \le \varepsilon n$ for all $x,y \in V$, then
$$
(1+o(1))\log_2 n \le \zeta(G) \le \max(-2/(\log_2 \varepsilon), 2) \log_2 n .
$$
\end{theorem}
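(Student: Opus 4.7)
The plan is to adapt the random-tournament argument of the preceding theorem; the essential new difficulty is that per-vertex quantities in a quasi-random tournament must be recovered from the averaged conditions of Theorem~\ref{thm:quasi1} rather than from direct concentration as in Lemma~\ref{ranl}.

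For the upper bound, I would bound the metric dimension $\beta(G)$ and invoke $\zeta(G)\le\beta(G)$. Select a set $W\subseteq V(G)$ of size $k=\max(-2/\log_2\varepsilon,\,2)\log_2 n$ uniformly at random. A pair $x,y$ is unresolved by $W$ precisely when $W\subseteq S(x,y)$, which has probability at most
\[
\frac{\binom{s(x,y)}{k}}{\binom{n}{k}}\;\le\;\left(\frac{s(x,y)}{n}\right)^{k}\;\le\;\varepsilon^{k}.
\]
With this $k$ one has $\varepsilon^{k}\le n^{-2}$ regardless of which branch of the $\max$ is active, so the expected number of unresolved pairs is at most $\binom{n}{2}\varepsilon^{k}\le 1/2$, and the probabilistic method supplies a resolving set of size $k$.

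For the lower bound, I would adapt the pigeonhole evasion strategy used for $T(n,p)$. Fix $\delta>0$ and suppose only $m<(1-\delta)\log_2 n$ cops play. Since $G$ has diameter $2$, every coordinate of a distance vector computed on a vertex other than a cop lies in $\{1,2\}$, so there are at most $2^m$ possible vectors. The obstacle is guaranteeing that from the robber's current vertex $v$ the set $N^+[v]$ comfortably exceeds $2^m$, a bound quasi-randomness does not immediately hand to us vertex by vertex. Here condition~(\ref{thm:quasi:balanced}) of Theorem~\ref{thm:quasi1}, $\sum_v|d^+(v)-d^-(v)|=o(n^2)$, yields via Markov's inequality the set $V'=\{v : |d^+(v)-d^-(v)|\le n/2\}$ with $|V'|=n-o(n)$ and $d^+(v)\ge(n-1)/4$ for every $v\in V'$, so the induced subtournament $H=G[V']$ has minimum out-degree at least $(1-o(1))n/4$.

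The robber will commit to remaining in $H$ throughout. After any cop move, from her vertex $v\in V'$ the set $N^+_H[v]\setminus\{c_1,\dots,c_m\}$ has size at least $(1-o(1))n/4-m$, which vastly dominates $2^m\le n^{1-\delta}$. Pigeonhole then produces two candidate vertices in this set with identical distance vectors, and the robber moves to one of them, avoiding identification. Iterating yields $\zeta(G)>m$ for every $\delta>0$, hence $\zeta(G)\ge(1+o(1))\log_2 n$. The most delicate step is the passage from the averaged quasi-randomness condition to a large subgraph with uniformly large out-degree; once that is in hand, the evasion argument is essentially that of the random case, using diameter $2$ to cap the number of distance vectors at $2^m$ and the fact that the $m=O(\log n)$ cops occupy a negligible portion of $N^+_H[v]$.
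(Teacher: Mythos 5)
Your proposal is correct and follows essentially the same route as the paper: the upper bound via a uniformly random candidate resolving set of size $k=\max(-2/\log_2\varepsilon,2)\log_2 n$ with the union bound over pairs and $\zeta(G)\le\beta(G)$, and the lower bound via the diameter-$2$ pigeonhole evasion argument restricted to the large set of vertices whose out-degree is bounded below using the almost-balanced condition of Theorem~\ref{thm:quasi1}. Your only (harmless, arguably cleaner) deviation is extracting out-degree at least $(1-o(1))n/4$ for all but $o(n)$ vertices by a single Markov application with threshold $n/2$, where the paper asserts out-degree $(1+o(1))n/2$ outside an $o(n)$-set; either suffices since the evasion argument only needs the out-neighborhood to dominate $2^m=n^{1-\delta}$.
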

\begin{proof}
For the lower bound, fix $0 < \alpha < 1/2$. We show that $\zeta(G) \ge \log_{2}(\alpha n) = (1+o(1))\log_2 n$.
By Theorem~\ref{thm:quasi1}\eqref{thm:quasi:balanced}, we know that for all but at most $o(n)$ vertices, the out-neighborhood has cardinality $(1+o(1))n/2$.
Call this set of vertices $V_0$ and in round 0, place the robber anywhere on $V\setminus V_0$.
Suppose there are $m < \log_2(\alpha n)$ cops.

Since $G$ has diameter $2$, we have that the cardinality of the set of distance vectors $(d_1, d_2, \ldots, d_m)$ for the vertices of $N^+[v]$ is at most $2^m < \alpha n$. However, we have that $|N^+[v] \setminus V_0| = n/2(1+o(1))$, and so the number of distance vectors is strictly less than the size of $N^+[v]\setminus V_0$. Hence, by the pigeonhole principle, there exists at least two vertices of $N^+[v]\setminus V_0$ that have the same distance vector.

As all the vertices of $w \in N^+[v]\setminus V_0$ will also have $|N^+[w] \setminus V_0| = n/2(1+o(1))$, the robber may always evade capture by moving to one of the vertices in $N^+[w]\setminus V_0$ with a duplicate distance vector associated with it.  Thus, $\zeta(G) \ge (1+o(1))\log_2 n$.

Now we turn to the upper bound, where our approach is to bound $\beta(G).$ We say that a vertex $v$ \emph{distinguishes} a pair $x,y \in V$ if $v \in \overline{S}(x,y)$ since then $x$ and $y$ have different distance probes to $v$. By Theorem~\ref{thm:quasi} \eqref{thm:quasi:s}, for all pairs $\{x,y\} \in \binom{V}{2} \setminus X_0$ we have that
$$
\overline{s}(x,y) = (1+o(1))\frac{n}{2} \qquad \textrm{and} \qquad s(x,y) = (1+o(1))\frac{n}{2},
$$
where $|X_0| = o(n^2)$.

Let $W$ be a random set of $k$ vertices in $V$, for
$$k = d \log_2 n \qquad \textrm{and}\qquad d(\varepsilon) =\max(-2/(\log_2 \varepsilon), 2).$$
Note that $2 \le d(\varepsilon) = O(1)$ for all fixed $0<\varepsilon<1$.
For a random choice of $W$, let $p_W$ be the probability that $W$ does not distinguish a pair $\{x,y\} \in \binom{V}{2} \setminus X_0$.
Hence, for pairs $\{x,y\} \in X_0$, by assumption $s(x,y) \le \varepsilon n$. In the case that $\varepsilon \ge 1/2$, we have that
$$
p_W \le \left(\frac{s(x,y)}{n}\right)^k\le \left(\frac{\varepsilon n}{n}\right)^k
= \varepsilon^k
= 2^{\log_2 \varepsilon \cdot d\log_2 n}
= 2^{ -2 \log_2 n}
= \frac1{n^2}.
$$
If $0 < \varepsilon \le 1/2$, then $p_W \le \varepsilon^k \le 1/2^k = 1/2^{2\log_2 n}= 1/n^2$.

Thus, we derive that the expected number of pairs $\{x,y\} \in \binom{V}{2}$ that are not distinguished is at most
$$
\frac{n^2}{2}p_W \le \frac12,
$$
so by the probabilistic method, there exists at least one $W$ that distinguishes all pairs. Hence, we have that $\beta(G) \le k$.
\end{proof}

A tournament $G$ of order $n$ if \textit{doubly regular} if it is a regular tournament such that for all pairs of vertices $x$ and $y$, the number of common out- and in-neighbors of $x$ and $y$ equals $(n-3)/4$. Note that doubly regular tournaments are quasi-random tournaments since they are regular. In addition, $s(x,y) = (n-3)/2$, which satisfies the conditions of Theorem~\ref{thm:quasi}. It was shown in~\cite{reid-brown-1972} that there are doubly regular tournaments of order $n$ if $n +1 = 2^t \prod k_i$, where $t$ is a nonnegative integer and each $k_i \equiv 0 \pmod 4$ is a prime power plus one.

\begin{corollary}\label{drt}
If $G$ is a doubly regular tournament of order $n$, then
$$
(1+o(1))\log_2 n \le \zeta(G) \le \beta(G) \le 2\log_2 n.
$$
\end{corollary}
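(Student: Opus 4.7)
The plan is to deduce this corollary from Theorem~\ref{thm:quasi} by verifying that doubly regular tournaments satisfy all of its hypotheses with $\varepsilon = 1/2$. First, I would show that any doubly regular tournament $G$ is quasi-random. The cleanest route is to use condition~\eqref{thm:quasi:s} of Theorem~\ref{thm:quasi1}. Since $G$ is regular with $|N^\pm(v)| = (n-1)/2$ and every pair of vertices has exactly $(n-3)/4$ common out-neighbors and $(n-3)/4$ common in-neighbors, a direct count gives
\[
s(u,v) = |N^+(u) \cap N^+(v)| + |N^-(u) \cap N^-(v)| = (n-3)/2
\]
for every pair $u, v$, so $\sum_{u,v \in V} |s(u,v) - n/2| = \binom{n}{2} \cdot 3/2 = O(n^2) = o(n^3)$.

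Next I would verify that $G$ has diameter~$2$ for $n$ large enough. Any arc gives distance~$1$; for a non-arc pair $(x, y)$ with $(y, x) \in E(G)$, a length-$2$ path $x \to w \to y$ exists provided $|N^+(x) \cap N^-(y)| > 0$, and this quantity equals $(n-3)/4$ in a doubly regular tournament, which is positive once $n \ge 7$. With diameter~$2$ established, and with $s(x,y) = (n-3)/2 \le n/2$ for all pairs, I can apply Theorem~\ref{thm:quasi} with $\varepsilon = 1/2$. Since $-2/\log_2(1/2) = 2$, the coefficient $\max(-2/\log_2 \varepsilon, 2)$ equals~$2$, and the proof of Theorem~\ref{thm:quasi} actually bounds $\beta(G)$ (not merely $\zeta(G)$) by this quantity, yielding $\beta(G) \le 2\log_2 n$. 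The lower bound $(1+o(1))\log_2 n \le \zeta(G)$ is the lower bound from Theorem~\ref{thm:quasi} applied directly, and the middle inequality $\zeta(G) \le \beta(G)$ is the standard bound noted in the introduction.

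There is essentially no obstacle here beyond careful verification of the hypotheses, as the corollary is a direct specialization of the preceding theorem; the only subtlety is confirming that the sameness-set bound $s(x,y) \le \varepsilon n$ can be taken with a constant $\varepsilon$ not exceeding $1/2$, which is exactly what keeps the upper bound coefficient at~$2$ rather than something larger.
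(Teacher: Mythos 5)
Your proposal is correct and follows essentially the same route as the paper: both reduce the corollary to Theorem~\ref{thm:quasi} by checking that $s(x,y)=(n-3)/2\le n/2$ (so $\varepsilon=1/2$ gives the coefficient $2$) and that $G$ has diameter $2$, the only difference being that you verify the diameter directly via $N^+(x)\cap N^-(y)\neq\emptyset$ while the paper argues by contradiction via a degree count. The one tiny slip is that $|N^+(x)\cap N^-(y)|$ equals $(n+1)/4$ rather than $(n-3)/4$ when $(y,x)$ is an arc, but this is immaterial since positivity is all that is needed.
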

\begin{proof}
Given the remarks above, it suffices to show that $G$ has diameter $2$. Suppose otherwise for a contradiction. There then exists two vertices $x,y$ with $(y,x) \in E(G)$ and for all $v\in \overline{S}(x,y)$, we have $(x,v), (v,y)\in E(G)$. We then have that $N^+(y) = (n-3)/4 + (n-1)/2 > (n-1)/2$, a contradiction.
Since $s(x,y) \le n/2$, the result follows.
\end{proof}

For a prime power $q,$ the \emph{Paley tournament} of order $q$ has vertices the elements of the finite field of order $q$, with $(i,j)$ an arc if $j-i$ is a quadratic residue in the field. Notice that Paley tournaments are doubly regular, yielding the following application of Corollary~\ref{drt}.
\begin{corollary}
If $P_q$ is the Paley tournament of prime power order $q$, then
$$
(1+o(1))\log_2 q \le \zeta(P_q) \le 2\log_2 q.
$$
\end{corollary}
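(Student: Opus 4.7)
The plan is to reduce the statement to Corollary~\ref{drt} by verifying that every Paley tournament $P_q$ is doubly regular, and then to invoke that corollary directly. For the orientation by the Legendre symbol to define a tournament (rather than a symmetric digraph), we need $-1$ to be a non-residue in $\mathbb{F}_q$, i.e., $q\equiv 3\pmod 4$, which is the standard hypothesis on Paley tournaments and which we assume throughout; in particular $\chi(-1)=-1$.

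Regularity is immediate by vertex-transitivity: every translation $z\mapsto z+c$ is an automorphism of $P_q$, so each out-neighborhood has size equal to the number of nonzero quadratic residues, namely $(q-1)/2$. For the common-neighbor count, fix distinct $x,y\in\mathbb{F}_q$ and let $\chi$ denote the Legendre symbol (with $\chi(0)=0$). A vertex $z\notin\{x,y\}$ is a common out-neighbor of $x$ and $y$ exactly when $\chi(z-x)=\chi(z-y)=+1$. The key input is the classical character-sum identity
$$\sum_{z\in\mathbb{F}_q\setminus\{x,y\}}\chi(z-x)\chi(z-y) \;=\; -1,$$
proved by the substitution $v=1+(y-x)/(z-y)$, which turns the sum into $\sum_{v\in\mathbb{F}_q\setminus\{0,1\}}\chi(v)=-1$ by orthogonality of $\chi$. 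Combining this with the row sums $\sum_{z\neq x,y}\chi(z-x)=-\chi(y-x)$ and $\sum_{z\neq x,y}\chi(z-y)=-\chi(x-y)$, together with the total count $q-2$ of admissible $z$, gives a small linear system in the four joint counts $N_{\pm\pm}$ that solves (using $\chi(-1)=-1$) to $N_{++}=(q-3)/4$. A symmetric argument handles common in-neighbors, so $P_q$ is doubly regular.

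There is no real obstacle in this plan: the only piece of genuine content is the classical Jacobsthal-type character sum above, and after it the derivation of $(q-3)/4$ is pure bookkeeping. Once $P_q$ is known to be doubly regular, Corollary~\ref{drt} supplies both the lower and upper bounds in a single application, completing the proof.
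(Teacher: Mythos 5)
Your proposal is correct and follows exactly the paper's route: the paper likewise deduces the corollary by observing that Paley tournaments are doubly regular and applying Corollary~\ref{drt}, though it leaves the double-regularity check as a known fact rather than spelling out the Jacobsthal character-sum computation as you do. Your verification of $N_{++}=(q-3)/4$ (and the in-neighbor count) is accurate, so the only difference is that you supply details the paper omits.
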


\section{Further directions}

We introduced the analogue of the Localization game for digraphs, and studied bounds and properties of the localization number. Many questions remain about this digraph parameter. While we proved that digraphs $G$ of order $n$ exist with $\zeta(G) = (1-o(1))n/2$, it remains open to show whether there are digraphs $G$ of order $n$ satisfying $\zeta(G) =\epsilon n$, where $1/2 < \epsilon<1.$

We focused on oriented graphs in this paper, but another direction is to consider more general digraphs that include \emph{digons} (that is, undirected edges). An open problem is to determine the localization number of directed trees that include digons.

We also would like to determine the precise value of $\zeta(T(n,p))$ when $p$ is constant. For example, if $p = 1/2$, then there is a factor of $2$ between our upper and lower bounds. Another question is to investigate $\zeta(T(n,p))$, where $p = p(n)$ is a function of $n$. As was done for localization number in~\cite{dudek-frieze-pegden-2019} on undirected random graphs, it would be interesting to determine $\zeta(T(n,p))$ when $T(n,p)$ has diameter $2$.

\end{document}